\newtheorem{thm}{Theorem}[section]
\newtheorem*{thma}{Theorem A}
\newtheorem*{thmb}{Theorem B}
\newtheorem*{corc}{Corollary C}
\newtheorem{cor}[thm]{Corollary}
\newtheorem{lem}[thm]{Lemma}
\newtheorem{prop}[thm]{Proposition}
\theoremstyle{definition}
\newtheorem{defn}[thm]{Definition}
\newtheorem{rem}[thm]{Remark}
\numberwithin{equation}{section}
\newcommand\xleftrightarrow[2][]{\ext@arrow 0099{\longleftrightarrowfill@}{#1}{#2}}
\def\longleftrightarrowfill@{\arrowfill@\leftarrow\relbar\rightarrow}
\def\mc{\mathcal}
\def\dim{\text{dim}}
\def\P{\mathbb{P}}
\def\R{\mathbb{R}}
\def\Z{\mathbb{Z}}
\def\cE{\mc{E}}
\def\cF{\mc{F}}
\def\cG{\mc{G}}
\def\cL{\mc{L}}
\def\cO{\mc{O}}
\def\cV{\mc{V}}
\begin{document}
\title{Castelnuovo-Mumford Regularity and GV-Sheaves on Irregular Varieties}
\author{Yusuf Mustopa}
\address{Tufts University, Department of Mathematics, Bromfield-Pearson Hall, 503 Boston Avenue, Medford, MA 02155}
\email{Yusuf.Mustopa@tufts.edu}
\begin{abstract}
Inspired by Beauville's recent construction of Ulrich sheaves on abelian surfaces \cite{Bea}, we pose the question of whether a torsion-free sheaf on a polarized smooth projective variety with Castelnuovo-Mumford regularity 1 is a GV (generic vanishing) sheaf, and present evidence that this question is governed by the positivity of curves on generalized Brill-Noether loci.  We prove that it has an affirmative answer for natural polarizations on many well-known irregular surfaces, as well as some polarizations on ruled threefolds over a curve. 
\end{abstract}
\maketitle

\section*{Introduction}

Castelnuovo-Mumford regularity measures the homological complexity of a graded module, and is therefore central to understanding coherent sheaves on $\P^N$.  If we restrict our attention to sheaves supported on an \textit{irregular} smooth subvariety $X \subset \P^N,$ the Fourier-Mukai methods developed by Pareschi and Popa in \cite{PP1,PP3,PP4} can also be brought to bear.  In this note, we investigate how the Castelnuovo-Mumford regularity of such sheaves interacts with this "Fourier-Mukai geometry." 

The interactions described by our results depend on whether the generalized Brill-Noether locus associated to $\cO_{X}(1)$ contains a sufficiently positive curve.  It is classically known that such curves exist when ${\rm dim}(X) = 1;$ the picture when ${\rm dim}(X) \geq 2$ is not yet clear, even though some dimension estimates exist for generalized Brill-Noether loci \cite{CP,MPP}.  We expect recent advances in the positivity theory of cycles (e.g.~ \cite{FL}) to play a role in further progress on the questions raised here. 

Turning to details, if $X$ is a smooth projective variety, $\cO_{X}(1)$ is an ample and globally generated line bundle on $X,$ and $\cF$ is a coherent sheaf on $X$ supported in dimension $\geq 1,$ the \textit{CM (Castelnuovo-Mumford) regularity} of $\cF$ on $X$ with respect to $\cO_{X}(1)$ is defined as
\begin{equation}
{\rm reg}_{\cO_{X}(1)}(\cF) := \min\{ m \in \Z : {\forall}i > 0~ H^{i}(\cF(m-i))=0\}
\end{equation}
In addition to measuring the complexity of $\cF,$ CM-regularity also helps measure the positivity of $\cF$; the Castelnuovo-Mumford lemma implies that $\cF(m)$ is globally generated for all $m \geq {\rm reg}_{\cO_{X}(1)}(\cF)$, while $\cF(m)$ may not have any global sections at all if $m < {\rm reg}_{\cO_{X}(1)}(\cF).$  In what follows, a polarization is understood to be a line bundle which is globally generated as well as ample.  

We are interested in the CM-regularity of a coherent sheaf $\cF$ on $X$ in the case $H^{1}(\cO_{X}) \neq 0,$ where the \textit{cohomological support loci}
\begin{equation}
V^{i}(\cF) := \{\alpha \in {\rm Pic}^{0}(X) : H^{i}(\cF \otimes \alpha) \neq 0\}, \hskip5pt 0 \leq i \leq {\rm dim}(X)
\end{equation}
are fundamentally important.  The dimensions of the $V^{i}(\cF)$ are invariant under tensoring by elements of ${\rm Pic}^{0}(X)$; however, the CM regularity of $\cF$ is generally not.  We define the \textit{continuous CM-regularity of $\cF$ with respect to $\cO_{X}(1)$} as
\begin{equation}
\label{eq:cont-cm}
{\rm reg}^{\rm cont}_{\cO_{X}(1)}(\cF) :=  \min\{ m \in \Z : {\forall}i > 0~ V^{i}(\cF(m-i)) \neq {\rm Pic}^{0}(X)\}
\end{equation}

This gives a slightly coarser measure of positivity than CM-regularity; see Lemma \ref{lem:cgg} for a precise statement.  The structure of cohomological support loci was addressed in \cite{GL} and \cite{Ha}, and their connections with positivity were pursued in \cite{De,PP1,PP4}.  A key notion in these papers, as well as this note, is the following:  $\cF$ is said to be a \textit{GV-sheaf} if ${\rm codim}(V^{i}(\cF)) \geq i$ for all $i > 0.$  This property can be viewed as a weak form of positivity in at least one sense; when the Albanese map of $X$ is finite, the stronger condition that ${\rm codim}(V^{i}(\cF)) > i$ for all $i > 0$ (this is known as \textit{M-regularity}) implies that $\cF$ is ample (\cite{De}, Corollary 3.2).

In recent years a special class of sheaves with CM-regularity 0 on a polarized variety of dimension $n \geq 1$ has been intensely studied; this is the class of \textit{Ulrich bundles}, i.e.~ vector bundles whose twist by $-i$ has no cohomology for $1 \leq i \leq n.$  Very recently, Beauville produced a family of rank-2 Ulrich bundles on abelian surfaces via the Serre method \cite{Bea}.  Each rank-2 bundle $\cF$ arising from his construction has the property that $\cF(1)$ is Ulrich; in particular $\cF$ has CM-regularity 1.  It can be verified directly that this $\cF$ is a GV-sheaf.  On a different note, every polarized curve admits an Ulrich bundle $\cE$ (e.g.~ \cite{ESW}) and the vanishing $H^{1}(\cE(-1))=0$ implies that $\cE( -1)$ is a GV-sheaf in this case.  

Since the Ulrich property is an open condition on families of vector bundles, it follows that in general the twist of any Ulrich bundle by $-1$ has continuous CM-regularity equal to 1.  Given that irrational curves and abelian surfaces are the only irregular varieties currently known to admit Ulrich bundles, one can ask if the twist of an Ulrich bundle by $-1$ is always a GV-sheaf.  Our results address the following question, which is broader in scope. 
\medskip

\noindent{$(\star)$ Let $X$ be a smooth projective variety of dimension $n \geq 1$ and let $\cO_{X}(1)$ be an ample and globally generated line bundle on $X.$  If $\cF$ is a torsion-free sheaf on $X$ satisfying ${\rm reg}^{\rm cont}_{\cO_{X}(1)}(\cF) \leq 1,$ is $\cF$ a GV-sheaf?}
\medskip
 
Note that we trivially have an affirmative answer when $H^{1}(\cO_{X})=0.$  One of the motivations for the study of M-regularity and related concepts in \cite{PP1} was to understand subvarieties of abelian varieties via the kind of insight that Castelnuovo-Mumford regularity provides for subvarieties of projective space.  Theorems of this type can be found in \cite{PP1,PP2,PP3} and more recently in \cite{LN}.  Our inquiries point in a different direction, since we are interested in deducing generic vanishing statements from "honest" Castelnuovo-Mumford regularity.

Our first result highlights the importance of the generalized Brill-Noether locus $V^{0}(\cO_{X}(1)).$  Even though the conclusion holds under a weaker hypothesis on $\cF,$ the present phrasing emphasizes the connection with $(\star).$

 
\begin{thma}
Let $X$ be a smooth projective variety of dimension $n \geq 1$, and let $\cO_{X}(1)$ be an ample and globally generated line bundle on $X$ satisfying the property that $V^{0}(\cO_{X}(1))$ contains a curve $T$ whose numerical class is a proportional to a power of an ample divisor class on ${\rm Pic}^{0}(X).$  If $\cF$ is a torsion-free sheaf on $X$ satisfying ${\rm reg}^{\rm cont}_{\cO_{X}(1)}(\cF) \leq 1,$ then ${\rm codim}(V^{n}(\cF)) \geq n.$ 
\end{thma}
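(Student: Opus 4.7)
The plan is to use sections coming from points of $V^0(\cO_X(1))$ to build Koszul-type short exact sequences on $X$, iterate them $n-1$ times so as to carry the hypothesis on $V^n(\cF(1-n))$ back to a set-theoretic constraint on $V^n(\cF)$, and then leverage the positivity of $T$ to turn that constraint into the codimension bound.

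For each $\beta \in T \subseteq V^0(\cO_X(1))$ I would pick a nonzero section of $\cO_X(1) \otimes \beta$ with zero divisor $D_\beta$ and consider the sequence
\[
0 \to \cO_X(-1) \otimes \beta^{-1} \to \cO_X \to \cO_{D_\beta} \to 0.
\]
Tensoring with $\cF \otimes \alpha$ (for $\alpha \in {\rm Pic}^0(X)$) stays exact since $\cF$ is torsion-free and $D_\beta$ is Cartier. As ${\rm dim}(D_\beta) = n-1$, the top piece of the associated long exact sequence is a surjection $H^n(\cF(-1) \otimes \alpha \otimes \beta^{-1}) \twoheadrightarrow H^n(\cF \otimes \alpha)$, so $\alpha \in V^n(\cF)$ forces $\alpha \beta^{-1} \in V^n(\cF(-1))$. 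Writing ${\rm Pic}^0(X)$ additively, this is the set-theoretic inclusion $V^n(\cF) - T \subseteq V^n(\cF(-1))$. Applying the same argument to each torsion-free twist $\cF(-k)$ and iterating $n-1$ times then yields
\[
V^n(\cF) - (n-1)T \;\subseteq\; V^n(\cF(1-n)),
\]
and the right-hand side is a proper closed subset of ${\rm Pic}^0(X)$ by the hypothesis ${\rm reg}^{\rm cont}_{\cO_X(1)}(\cF) \leq 1$ applied with $i = n$.

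Converting this containment into the desired codimension bound is the heart of the argument and the place where the positivity of $T$ enters. The dimension statement I would prove and then apply is that for any irreducible closed subvariety $W \subseteq {\rm Pic}^0(X)$ of dimension $d$, $\dim(W + T) = \min(d+1, q)$ where $q = h^1(\cO_X)$, and hence $\dim(W + kT) = \min(d+k, q)$, with equality to $q$ forcing $W + kT = {\rm Pic}^0(X)$ by irreducibility. Granting this and taking $W$ to be an irreducible component of $V^n(\cF)$ of maximum dimension, if $\dim W \geq q - n + 1$ then $W - (n-1)T$ would exhaust ${\rm Pic}^0(X)$, contradicting the inclusion above. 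Hence $\dim V^n(\cF) \leq q - n$, i.e., ${\rm codim}(V^n(\cF)) \geq n$.

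The likely delicate step is the dimension claim, which I would establish by a stabilizer argument. If $\dim(W + T) \leq d$, then the irreducible closed sets $W + T$ and the translate $W + \beta_0$ (for fixed $\beta_0 \in T$) share a dimension with the latter contained in the former, so they coincide; this forces $T \subseteq \beta_0 + B$ where $B$ is the identity component of the stabilizer of $W$. Since $W$ is proper, $B$ is a proper abelian subvariety, and pulling back an ample class on the quotient ${\rm Pic}^0(X)/B$ produces a nonzero nef class $D$ on ${\rm Pic}^0(X)$ with $[T] \cdot D = 0$ (as $T$ maps to a point under the quotient). On the other hand, $[T] = c [a^{q-1}]$ with $a$ ample and $c > 0$, so $[T] \cdot D$ is the intersection of the top power of an ample class with a nonzero nef class; this is strictly positive on a projective variety as a standard consequence of the Khovanskii-Teissier inequalities, providing the required contradiction. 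The subtraction case reduces to the same argument applied to $-T$, whose numerical class inherits the requisite proportionality property under the inversion automorphism of ${\rm Pic}^0(X)$.
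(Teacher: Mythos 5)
Your proof is correct, and while its first half coincides with the paper's, the way you extract the codimension bound from the positivity of $T$ is genuinely different. Like the paper, you restrict to divisors in $|\cO_{X}(1)\otimes\beta|$ for $\beta\in T$ to show that $V^{n}(\cF)$ translated by the $(n-1)$st Pontryagin product of $-T$ lands in the proper closed subset $V^{n}(\cF(1-n))$ (the paper phrases this as $V^{n}(\cF)\cap T^{\ast(n-1)}=\emptyset$ after first using Lemma \ref{lem:basic-cont-reg} to reduce to honest $1$-regularity, whereas you work directly with the continuous hypothesis; this is a cosmetic difference). The divergence is in the second half: the paper invokes the computation from \cite{DELV} (Lemma \ref{lem:delv}) to identify the class of $T^{\ast(n-1)}$ with a positive multiple of $H^{q-n+1}$ and concludes that it meets every subvariety of codimension $\leq n-1$, while you prove by hand, via the stabilizer of an irreducible component $W$ of $V^{n}(\cF)$ and the quotient by the resulting abelian subvariety, that adding $T$ to any proper irreducible subvariety strictly increases its dimension. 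Your route is more elementary (no input from \cite{DELV}) and strictly more general: it uses only that $[T]\cdot D>0$ for every nonzero nef divisor class $D$ on ${\rm Pic}^{0}(X)$, i.e.~that $[T]$ lies in the interior of the cone of curves, which is exactly the weaker hypothesis of Theorem B; so your argument simultaneously proves Theorem A, Theorem B, and their common higher-dimensional generalization. What the paper's approach buys instead is an explicit numerical class for $T^{\ast(k)}$, which is what motivates the ``roadmap'' for attacking $(\star)$ in intermediate cohomological degrees.

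One small point to tighten: the positivity $a^{q-1}\cdot D>0$ for $D$ nef and nonzero does not follow from the basic Khovanskii--Teissier inequality $(a^{q-1}\cdot D)^{q}\geq (a^{q})^{q-1}D^{q}$, since in your situation $D=p^{\ast}a'$ has $D^{q}=0$. It does follow from the full log-concavity of $i\mapsto D^{i}\cdot a^{q-i}$ together with $D^{q'}\cdot a^{q-q'}>0$ for $q'=\dim({\rm Pic}^{0}(X)/B)$, or simply from the standard fact (used without comment by the paper itself in Lemma \ref{lem:pi-exc}) that a power of an ample class lies in the interior of the cone of curves and hence pairs positively with every nonzero nef divisor class.
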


The hypothesis on $\cO_{X}(1)$ is trivially satisfied whenever $V^{0}(\cO_{X}(1))={\rm Pic}^{0}(X).$  An Euler characteristic calculation shows that this holds if $h^{0}(\cO_{X}(1)) > 1$ and $\cO_{X}(1)$ is a GV-sheaf; by Corollary C of \cite{PP4}, any globally generated adjunction of a nef line bundle has this property when $X$ has maximal Albanese dimension.  See Remark \ref{rem:symmetric} for a family of examples satisfying $V^{0}(\cO_{X}(1)) \neq {\rm Pic}^{0}(X).$

The next result follows easily from the proof of Theorem A (which will be discussed momentarily) but its assumption on $\cO_{X}(1)$ is much weaker.

\begin{thmb}
Let $X$ be a smooth projective surface with $h^{1}(\cO_{X}) > 0,$ and let $\cO_{X}(1)$ be an ample and globally generated line bundle on $X$ such that $V^{0}(\cO_{X}(1))$ contains a curve $T$ whose numerical class lies in the interior of the cone of curves of ${\rm Pic}^{0}(X).$  Then $(\star)$ has an affirmative answer for $(X,\cO_{X}(1))$.
\end{thmb}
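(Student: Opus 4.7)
The plan is to run the argument sketched for Theorem A with $n=2$, taking advantage of the fact that ${\rm reg}^{\rm cont}_{\cO_{X}(1)}(\cF) \leq 1$ already implies $V^{1}(\cF) \neq {\rm Pic}^{0}(X)$ (so ${\rm codim}(V^{1}(\cF)) \geq 1$ is automatic), leaving only ${\rm codim}(V^{2}(\cF)) \geq 2$ as the content of $(\star)$. The weaker hypothesis on $[T]$ suffices on a surface because a single cohomology-transfer across the Koszul sequence of a section of $\cO_{X}(1)\otimes t$ is enough, rather than the iterated version required in higher dimension.

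Concretely, for each $t \in T \subset V^{0}(\cO_{X}(1))$ a nonzero section of $\cO_{X}(1)\otimes t$ gives an effective divisor $D_{t}$ and a short exact sequence $0 \to \cO_{X}(-1)\otimes t^{-1} \to \cO_{X} \to \cO_{D_{t}} \to 0$. Tensoring with $\cF \otimes \alpha$ remains exact since $\cF$ is torsion-free, and $H^{2}(\cF|_{D_{t}}\otimes \alpha)=0$ because $D_{t}$ is a curve; the long exact sequence then yields a surjection $H^{2}(\cF(-1)\otimes t^{-1}\alpha) \onto H^{2}(\cF \otimes \alpha)$, and letting $t$ vary produces the key containment
\[ V^{2}(\cF) \cdot T^{-1} \subset V^{2}(\cF(-1)). \]

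The core of the argument is a geometric lemma on ${\rm Pic}^{0}(X)$: if $Z \subsetneq {\rm Pic}^{0}(X)$ is a proper irreducible closed subvariety and $T$ is an irreducible curve whose numerical class lies in the interior of the cone of curves, then $\dim(Z \cdot T^{-1}) \geq \dim Z + 1$. Otherwise all the translates $Z \cdot t^{-1}$ (for $t \in T$) would coincide, placing a translate of $T$ in the identity component $B$ of ${\rm stab}(Z)$; since $Z$ is proper, $B$ is a proper abelian subvariety, and the pullback of an ample class from ${\rm Pic}^{0}(X)/B$ is then a nonzero nef class pairing to zero with $[T]$ (using that numerical classes of curves on abelian varieties are translation-invariant), contradicting the hypothesis. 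Applying this lemma to a maximal-dimensional component of $V^{2}(\cF)$, if ${\rm codim}(V^{2}(\cF)) \leq 1$ then $V^{2}(\cF) \cdot T^{-1}$ sweeps out all of ${\rm Pic}^{0}(X)$, forcing $V^{2}(\cF(-1)) = {\rm Pic}^{0}(X)$ and contradicting ${\rm reg}^{\rm cont}_{\cO_{X}(1)}(\cF) \leq 1$. The main obstacle is exactly this geometric lemma: it is where the interior-of-cone-of-curves hypothesis does its work, and it replaces the intersection-theoretic computation (with $[T]$ proportional to a power of an ample class) needed in the higher-dimensional statement of Theorem A.
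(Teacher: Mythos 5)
Your proof is correct, and its engine --- the Koszul sequence of a divisor $D_{t} \in |\cO_{X}(1)\otimes t|$ for $t \in T$, used to transfer $H^{2}$-nonvanishing from $\cF\otimes\alpha$ to $\cF(-1)\otimes t^{-1}\otimes\alpha$ --- is exactly the one in the paper (it is the $n=2$ case of the proof of Proposition \ref{prop:top-codim}). Where you diverge is in how the interior-of-the-cone-of-curves hypothesis is cashed in. The paper first invokes Lemma \ref{lem:basic-cont-reg} to replace $\cF$ by a twist that is honestly $1$-regular, so that $H^{2}(\cF(-1))=0$ and the exact sequence yields the cleaner statement $V^{2}(\cF)\cap T=\emptyset$; it then finishes with a one-line duality argument: on the abelian variety ${\rm Pic}^{0}(X)$ the nef and pseudoeffective cones of divisors coincide, so a curve class interior to the cone of curves has positive degree on every nonzero effective divisor, hence $T$ meets every codimension-$1$ subvariety and $V^{2}(\cF)$ can have no divisorial component. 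You instead skip the normalization, record the containment $V^{2}(\cF)\cdot T^{-1}\subseteq V^{2}(\cF(-1))\subsetneq {\rm Pic}^{0}(X)$, and conclude via a stabilizer-and-quotient lemma asserting that translating a proper irreducible subvariety along $T$ strictly increases its dimension. Both concluding steps are valid and both ultimately rest on the same fact that $[T]$ pairs strictly positively with every nonzero nef class (in your case applied to the pullback of an ample class from ${\rm Pic}^{0}(X)/B$); yours is longer and requires the routine checks that $Z\cdot T^{-1}$ is irreducible and closed and that the stabilizer is a proper subgroup, but it proves a slightly more general geometric statement and avoids the reduction to honest $1$-regularity, while the paper's version is shorter and stays entirely within numerical intersection theory.
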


The following consequence, immediate from Kawamata-Viehweg vanishing, implies that if $\cE$ is any Ulrich bundle on a polarized abelian surface then $\cE(-1)$ is a GV-sheaf.  It is worth noting that Beauville's Ulrich bundles, for which this can be checked explicitly, only account for a hypersurface in the relevant moduli space (Remark 3, \cite{Bea}). 

\begin{corc}
If $X$ is a smooth projective surface and $\cL$ is a nef and big line bundle on $X$ for which $\cO_{X}(1) := \omega_{X} \otimes \cL$ is ample and globally generated, then $(\star)$ has an affirmative answer for $(X,\cO_{X}(1))$.
\end{corc}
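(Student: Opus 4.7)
The strategy is to verify the hypotheses of Theorem B for $(X, \cO_{X}(1))$ with $\cO_{X}(1) = \omega_{X} \otimes \cL$. The case $h^{1}(\cO_{X}) = 0$ is the trivial one noted right after $(\star)$, so I assume $q := h^{1}(\cO_{X}) \geq 1$, in which case $\Pic^{0}(X)$ is an abelian variety of positive dimension.

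First I would show $V^{0}(\cO_{X}(1)) = \Pic^{0}(X)$. For any $\alpha \in \Pic^{0}(X)$ the line bundle $\cL \otimes \alpha$ is still nef and big, since numerical triviality preserves both properties, so Kawamata-Viehweg vanishing yields
$$H^{i}\bigl(\cO_{X}(1) \otimes \alpha\bigr) = H^{i}\bigl(\omega_{X} \otimes (\cL \otimes \alpha)\bigr) = 0 \quad \text{for every } i > 0.$$
Hence the Euler characteristic is constant on $\Pic^{0}(X)$ and $h^{0}(\cO_{X}(1) \otimes \alpha) = \chi(\cO_{X}(1)) = h^{0}(\cO_{X}(1)) \geq 1$ for every $\alpha$, since $\cO_{X}(1)$ is globally generated and nontrivial.

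Next I would exhibit a curve $T \subset \Pic^{0}(X)$ whose numerical class lies in the interior of the cone of curves. When $q = 1$, take $T = \Pic^{0}(X)$ itself, whose class spans the one-dimensional $N_{1}$. When $q \geq 2$, choose an ample line bundle $\cM$ on the abelian variety $\Pic^{0}(X)$ and let $T$ be a smooth complete intersection of $q-1$ general members of $|\cM^{\otimes m}|$ for $m \gg 0$; Bertini delivers such a $T$, whose class is proportional to $[\cM]^{q-1}$. The translates of $T$ form a family of curves covering $\Pic^{0}(X)$, which forces $D \cdot T > 0$ for every nonzero nef class $D \in N^{1}(\Pic^{0}(X))$; by convex duality this places $[T]$ in the interior of the Mori cone. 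Theorem B now applies.

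No step presents a genuine obstacle; the only item requiring care is the interiority claim for $[\cM]^{q-1}$, which is ensured by the translation invariance of the abelian variety $\Pic^{0}(X)$ and the consequent movability of $T$.
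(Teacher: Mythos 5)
Your argument is correct and follows the paper's intended route: the paper derives Corollary C by noting that Kawamata--Viehweg vanishing forces $V^{i}(\omega_{X}\otimes\cL\otimes\alpha)=\emptyset$ for $i>0$, whence the Euler characteristic argument gives $V^{0}(\cO_{X}(1))={\rm Pic}^{0}(X)$ and the hypothesis of Theorem B is trivially satisfied. Your only loose phrase is justifying $D\cdot T>0$ via the covering family of translates (a covering family alone gives only $D\cdot T\geq 0$); the correct justification, which you also have in hand, is that $[T]$ is proportional to a power of an ample class and hence pairs strictly positively with every nonzero nef (equivalently, on an abelian variety, pseudoeffective) divisor class.
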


We also use Theorem B to obtain a positive answer to $(\star)$ for some natural polarizations on Cartesian and symmetric products of curves (Propositions \ref{prop:prod-curves} and \ref{prop:sym-sq}, respectively) and for all polarizations on some surfaces isogenous to a product of curves (Proposition \ref{prop:isog-prod}).



The idea behind our proofs of Theorems A and B is to use the curve $T \subseteq V^{0}(\cO_{X}(1))$ to construct a ``large enough" positive cycle in ${\rm Pic}^{0}(X)$ that does not intersect $V^{n}(\cF).$  For Theorem A, this cycle is a Pontryagin product of $T$ (Definition \ref{def:pont-prod}), and we use a calculation from \cite{DELV} to show that its numerical class is proportional to a product of ample divisors (Lemma \ref{lem:delv}).  Our construction suggests a roadmap for settling $(\star)$ in the affirmative:  take a sufficiently positive curve $T$ in ${\rm Pic}^{0}(X)$ and for $1 \leq i \leq n-1,$ prove that the $(i-1)$st Pontryagin product of $T$ intersects every effective cycle of codimension $i-1$ but does not intersect $V^{i}(\cF).$  Up to now we have been unable to carry this out in full generality.  However, we have verified $(\star)$ for some scrollar embeddings of ruled threefolds over a curve (Proposition \ref{thm:3fold}). 

\medskip

\noindent
\textbf{Acknowledgments:}  I would like to thank Alex K\"{u}ronya, Rita Pardini and Mihnea Popa for useful discussions and correspondence related to this work, and for valuable comments on a preliminary draft.
\medskip

\section{Preliminaries}

Throughout, we work over an algebraically closed field of characteristic zero.  In what follows, $X$ is a smooth projective variety of dimension $n \geq 1,$ $\cO_{X}(1)$ is an ample and globally generated line bundle on $X,$ and $\cF$ is a coherent sheaf on $X$ supported in dimension $\geq 1.$

\subsection{Continuous CM-regularity}

We discuss the definition (\ref{eq:cont-cm}) in more detail.

\begin{defn}
If $k \in \Z,$ we say $\cF$ is continuously $k-$regular with respect to $\cO_{X}(1)$ if for $1 \leq i \leq n$ we have that $V^{i}(\cF(k-i)) \neq {\rm Pic}^{0}(X).$  
\end{defn}

\begin{lem}
\label{lem:basic-cont-reg}
The following are equivalent:
\begin{itemize}
\item[(i)]{$\cF \otimes \alpha$ is continuously $k-$regular with respect to $\cO_{X}(1)$ for all $\alpha \in {\rm Pic}^{0}(X).$}
\item[(ii)]{$\cF$ is continuously $k-$regular with respect to $\cO_{X}(1)$}.
\item[(iii)]{$\cF \otimes \alpha$ is continuously $k-$regular with respect to $\cO_{X}(1)$ for some $\alpha \in {\rm Pic}^{0}(X).$}
\item[(iv)]{$\cF \otimes \alpha$ is $k-$regular with respect to $\cO_{X}(1)$ in the sense of Castelnuovo-Mumford for some $\alpha \in {\rm Pic}^{0}(X).$ \hfill \qedsymbol}
\end{itemize}  
\end{lem}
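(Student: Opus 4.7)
The plan is to prove $(i) \Leftrightarrow (ii) \Leftrightarrow (iii)$ by translation-invariance of the cohomological support loci, and then to connect these to $(iv)$ via the irreducibility of $\Pic^0(X)$ and semicontinuity of cohomology.

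My first step is to observe that for any coherent sheaf $\cG$ on $X$ and any $\alpha \in \Pic^0(X)$, the cohomological support locus of the twist is a translate:
\[
V^i(\cG \otimes \alpha) \;=\; V^i(\cG) - \alpha,
\]
since $H^i(\cG \otimes \alpha \otimes \beta) \neq 0$ precisely when $\alpha + \beta \in V^i(\cG)$. In particular, the property $V^i(\cG \otimes \alpha) \neq \Pic^0(X)$ is independent of the choice of $\alpha$. Applying this with $\cG = \cF(k-i)$ for each $i$ in turn yields the chain of equivalences $(i) \Leftrightarrow (ii) \Leftrightarrow (iii)$.

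For the equivalence $(ii) \Leftrightarrow (iv)$, I plan to reinterpret $(iv)$ in terms of the loci $W_i := V^i(\cF(k-i))$. By definition, $\cF \otimes \alpha$ is Castelnuovo-Mumford $k$-regular if and only if $H^i((\cF \otimes \alpha)(k-i)) = 0$ for all $i > 0$, which by the translation identity above amounts to $\alpha \notin W_i$ for every $i \in \{1, \dots, n\}$. Hence $(iv)$ is equivalent to the assertion that $\bigcup_{i=1}^n W_i \subsetneq \Pic^0(X)$. The implication $(iv) \Rightarrow (ii)$ is then immediate, since each $W_i$ lies inside this union. For the converse $(ii) \Rightarrow (iv)$, I would invoke two standard facts: each $W_i$ is Zariski closed in $\Pic^0(X)$ by the semicontinuity theorem applied to the flat family $\{\cF(k-i) \otimes \beta\}_{\beta \in \Pic^0(X)}$, and $\Pic^0(X)$ is an irreducible variety (being an abelian variety). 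A finite union of proper closed subsets of an irreducible variety cannot exhaust that variety, so $(ii)$ forces $\bigcup_i W_i \subsetneq \Pic^0(X)$, which is $(iv)$.

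No step here is a serious obstacle; the only essential structural ingredient is the irreducibility of $\Pic^0(X)$, without which finitely many proper closed subsets could conceivably cover the parameter space. Everything else amounts to unwinding the definition of continuous regularity in light of the $\Pic^0(X)$-translation action and the usual semicontinuity of cohomology.
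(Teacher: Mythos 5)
Your proof is correct and follows essentially the same route as the paper: translation-invariance of the loci $V^i$ gives the equivalence of the continuous conditions, and the closedness of each $V^i$ together with the irreducibility of ${\rm Pic}^0(X)$ supplies the passage to honest Castelnuovo--Mumford $k$-regularity for some twist. The paper simply declares ${\rm (iii)} \Rightarrow {\rm (iv)}$ ``immediate'' and only records the invariance argument for ${\rm (iv)} \Rightarrow {\rm (i)}$, so your write-up fills in exactly the step it leaves implicit.
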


\begin{proof}
The implications ${\rm (i)} \Rightarrow {\rm (ii)} \Rightarrow {\rm (iii)} \Rightarrow {\rm (iv)}$ are all immediate.  For ${\rm (iv)} \Rightarrow {\rm (i)},$ observe that for all $i$ the dimension of $V^{i}(\cF(k-i))$ is invariant under tensoring by elements of ${\rm Pic}^{0}(X).$
\end{proof}

The next statement follows from combining (iv) of Lemma \ref{lem:basic-cont-reg} with the corresponding property of Castelnuovo-Mumford regularity.

\begin{cor}
If $\cF$ is continuously $k-$regular, then $\cF$ is continuously $k'-$regular for all $k' \geq k.$ \hfill \qedsymbol
\end{cor}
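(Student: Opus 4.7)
The plan is to reduce this monotonicity statement to the classical monotonicity of Castelnuovo-Mumford regularity by invoking the equivalence established in Lemma \ref{lem:basic-cont-reg}. Specifically, the strategy proceeds in three short steps.

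First, I would use the implication ${\rm (ii)} \Rightarrow {\rm (iv)}$ of Lemma \ref{lem:basic-cont-reg}: assuming $\cF$ is continuously $k$-regular with respect to $\cO_X(1)$, there exists some $\alpha \in {\rm Pic}^0(X)$ such that $\cF \otimes \alpha$ is $k$-regular in the sense of Castelnuovo-Mumford with respect to $\cO_X(1)$. This translates the hypothesis from a statement about nonvanishing loci in ${\rm Pic}^0(X)$ into a statement about honest cohomology vanishing on $X$.

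Next, I would appeal to the classical fact that Castelnuovo-Mumford regularity with respect to an ample and globally generated line bundle is monotone: if a coherent sheaf is $k$-regular, it is $k'$-regular for all $k' \geq k$. Applying this to $\cF \otimes \alpha$, we conclude that $\cF \otimes \alpha$ is $k'$-regular with respect to $\cO_X(1)$ for every $k' \geq k$.

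Finally, for each such $k'$, the existence of this single $\alpha$ making $\cF \otimes \alpha$ be $k'$-regular is precisely condition (iv) of Lemma \ref{lem:basic-cont-reg} applied at level $k'$. The implication ${\rm (iv)} \Rightarrow {\rm (ii)}$ (or even ${\rm (iv)} \Rightarrow {\rm (i)}$) then yields that $\cF$ is continuously $k'$-regular, completing the argument. There is no real obstacle here; the only point requiring any care is being clear that the classical monotonicity of CM regularity holds for ample globally generated polarizations on arbitrary smooth projective $X$ (and not just on $\P^N$), which is standard and already implicitly used in the statement of Lemma \ref{lem:basic-cont-reg}.
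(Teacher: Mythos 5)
Your proposal is correct and follows exactly the paper's intended argument: the paper proves this corollary by "combining (iv) of Lemma \ref{lem:basic-cont-reg} with the corresponding property of Castelnuovo-Mumford regularity," which is precisely your three-step reduction. Nothing further is needed.
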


\begin{defn}
The continuous CM-regularity of $\cF$ with respect to $\cO_{X}(1)$ is the smallest integer $k$ for which $\cF$ is continuously $k-$regular with respect to $\cO_{X}(1);$ we denote it by ${\rm reg}^{\rm cont}_{\cO_{X}(1)}(\cF).$
\end{defn}

We conclude this subsection with evidence for our earlier assertion that continuous CM-regularity helps measure positivity.  Recall that a coherent sheaf $\cF$ on $X$ is \textit{continuously globally generated} if there is a nonempty Zariski-open subset $U \subseteq {\rm Pic}^{0}(X)$ such that the evaluation map
\begin{equation}
\bigoplus_{\alpha \in U}H^{0}(\cF \otimes \alpha) \otimes \alpha^{\vee} \to \cF
\end{equation}
is surjective.  The following fact is immediate from (iv) of Lemma \ref{lem:basic-cont-reg}.

\begin{lem}
\label{lem:cgg}
If $\cF$ is continuously $k-$regular with respect to $\cO_{X}(1),$ then $\cF(k)$ is continuously globally generated.  In particular, $\cF(k)$ is nef. \hfill \qedsymbol
\end{lem}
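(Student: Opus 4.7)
The plan is to reduce continuous $k$-regularity to ordinary Castelnuovo-Mumford $k$-regularity on a non-empty Zariski open subset of ${\rm Pic}^{0}(X)$, and then apply the classical Castelnuovo-Mumford lemma pointwise in that parameter.

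First I would observe that the definition of continuous $k$-regularity, combined with semicontinuity of cohomology along the Poincar\'e sheaf, guarantees that the locus
\[
U \;:=\; \bigcap_{i=1}^{n} \bigl({\rm Pic}^{0}(X) \setminus V^{i}(\cF(k-i))\bigr)
\]
is a non-empty Zariski open subset of ${\rm Pic}^{0}(X)$, being a finite intersection of non-empty opens in the irreducible variety ${\rm Pic}^{0}(X)$. For every $\alpha \in U$, the sheaf $\cF \otimes \alpha$ is Castelnuovo-Mumford $k$-regular with respect to $\cO_{X}(1)$, which is exactly the content of equivalence (iv) of Lemma \ref{lem:basic-cont-reg} read slice by slice.

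Next, for each such $\alpha$ the classical Castelnuovo-Mumford lemma produces a surjection $\cO_{X}^{\oplus N_{\alpha}} \twoheadrightarrow \cF(k) \otimes \alpha$; twisting by $\alpha^{\vee}$ yields a surjection $(\alpha^{\vee})^{\oplus N_{\alpha}} \twoheadrightarrow \cF(k)$. Assembling these surjections over $\alpha \in U$ gives surjectivity of the evaluation map
\[
\bigoplus_{\alpha \in U} H^{0}(\cF(k) \otimes \alpha) \otimes \alpha^{\vee} \;\twoheadrightarrow\; \cF(k),
\]
which is precisely the continuous global generation of $\cF(k)$.

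For the final nef assertion, any single $\alpha \in U$ already exhibits $\cF(k)$ as a quotient of $(\alpha^{\vee})^{\oplus N_{\alpha}}$; since $\alpha^{\vee} \in {\rm Pic}^{0}(X)$ is numerically trivial and therefore nef, the direct sum is nef, and any quotient of a nef sheaf on a smooth projective variety is nef. The argument is essentially mechanical; the one point that could become subtle in a more general setting is the honest non-emptiness of $U$, but here this is immediate from the defining inequality $V^{i}(\cF(k-i)) \neq {\rm Pic}^{0}(X)$ for each $i$ together with irreducibility of ${\rm Pic}^{0}(X)$, so I do not anticipate any serious obstacle.
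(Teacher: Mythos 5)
Your proof is correct and follows the same route the paper intends: the paper dismisses this as ``immediate from (iv) of Lemma \ref{lem:basic-cont-reg},'' and your write-up simply fleshes out that observation (pass to the open locus where $\cF \otimes \alpha$ is honestly CM $k$-regular, apply the Castelnuovo--Mumford lemma, and twist by $\alpha^{\vee}$). No discrepancies to report.
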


\begin{rem}
One of the main results in \cite{PP1} is that the M-regularity of a coherent sheaf on an abelian variety implies continuous global generation.  In light of Lemma \ref{lem:cgg} and the line of inquiry suggested by $(\star),$ it is natural to ask whether 0-regularity with respect to $\cO_{X}(1)$ implies M-regularity.   
\end{rem}


\subsection{Numerical Cycle Classes and Pontryagin Products}

We now collect some statements on intersections of cycles that will be used in the sequel.  In what follows, $X$ is a smooth projective variety of dimension $n \geq 1.$


\begin{defn}
For $0 \leq k \leq n,$ $N_{k}(X) := N_{k}(X)_{\Z} \otimes \R,$ where $N_{k}(X)_{\Z}$ is the group of algebraic $k-$cycles modulo numerical equivalence.  The numerical dual group $N^{k}(X)$ is the dual of $N_{k}(X).$  
\end{defn}

Since the evaluation pairing $N^{k}(X) \times N_{k}(X) \to \R$ can be identified with an intersection pairing, we view $N^{k}(X)$ as parametrizing numerical classes of cycles of codimension $k.$  The following result on intersections of divisors will be used in the proof of Proposition \ref{prop:prod-curves}.

\begin{lem}
\label{lem:pi-exc}
Let $Y$ and $Z$ be projective varieties of respective dimensions $m$ and $n,$ and let $\pi_{Y}$ and $\pi_{Z}$ be the projections from $Y \times Z$ to $Y$ and $Z$.  Let $D$ be a nef divisor on $Y \times Z,$ and let $H, H_{Y}, H_{Z}$ be ample divisors on $Y \times Z,$ $Y$ and $Z,$ respectively.  Then if $D \cdot H \cdot \pi_{Y}^{\ast}H_{Y}^{m-1} \cdot \pi_{Z}^{\ast}H_{Z}^{n-1} = 0$ we have $D = 0.$
\end{lem}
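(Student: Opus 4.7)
The plan is to reduce to the Hodge index theorem on a general complete-intersection surface $S \subset Y \times Z$ and then recover $D \equiv 0$ globally via a Künneth-plus-Lefschetz argument. By Bertini, I would choose smooth curves $C_Y \subset Y$ and $C_Z \subset Z$ that are complete intersections of $m-1$ (resp.\ $n-1$) general members of $|kH_Y|$ (resp.\ $|kH_Z|$) for some $k \gg 0$, so that $S := C_Y \times C_Z$ is a smooth surface of class $k^{m+n-2}\, \pi_Y^*H_Y^{m-1} \cdot \pi_Z^*H_Z^{n-1}$. The hypothesis then reads $(D|_S) \cdot (H|_S) = 0$, with $D|_S$ nef and $H|_S$ ample. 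Nefness gives $(D|_S)^2 \geq 0$, while the surface Hodge inequality $(D|_S \cdot H|_S)^2 \geq (D|_S)^2 (H|_S)^2$ forces $(D|_S)^2 \leq 0$. Thus $(D|_S)^2 = 0$, and since $D|_S$ lies in the $H|_S$-primitive part of $N^1(S)$ on which the intersection form is negative definite, we conclude $D|_S \equiv 0$.

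To transport this to $Y \times Z$, I would use the direct-sum decomposition
\begin{equation*}
\mathrm{NS}(Y \times Z) \;\cong\; \mathrm{NS}(Y) \oplus \mathrm{NS}(Z) \oplus \Hom(\mathrm{Alb}(Y), \Pic^0(Z)),
\end{equation*}
writing $D = \pi_Y^*D_Y + \pi_Z^*D_Z + D_{\mathrm{cross}}$. Since this decomposition is natural under the inclusion $S \hookrightarrow Y \times Z$, the simultaneous vanishing of the three components of $D|_S$ follows from $D|_S \equiv 0$. For the first two, intersecting $D$ with a ruling $C_Y \times \{\mathrm{pt}\}$ of $S$ gives $D_Y \cdot H_Y^{m-1} = 0$ on $Y$; since $D_Y$ is nef and $H_Y$ is ample, the standard result that a nef class killed by the top power of an ample is numerically trivial yields $D_Y \equiv 0$, and symmetrically $D_Z \equiv 0$. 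For the cross term, Lefschetz hyperplane applied to the ample complete-intersection curves $C_Y \subset Y$ and $C_Z \subset Z$ produces a surjection $J(C_Y) \twoheadrightarrow \mathrm{Alb}(Y)$ and an injection $\Pic^0(Z) \hookrightarrow J(C_Z)$, so the induced restriction $\Hom(\mathrm{Alb}(Y), \Pic^0(Z)) \to \Hom(J(C_Y), J(C_Z))$ is injective; this forces $D_{\mathrm{cross}} = 0$, and altogether $D \equiv 0$.

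The main obstacle will be the cross-term vanishing: because the restriction $N^1(Y \times Z) \to N^1(S)$ is generally not injective when $S = C_Y \times C_Z$ is merely a product of curves rather than an ample complete-intersection surface in $Y \times Z$, we cannot conclude $D \equiv 0$ immediately from $D|_S \equiv 0$, and must carefully separate the ``mixed'' Picard classes and apply Lefschetz on each factor. The edge cases $m = 1$ or $n = 1$, where $C_Y = Y$ or $C_Z = Z$ and no Bertini cutting is needed, are handled by the same argument.
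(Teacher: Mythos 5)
Your argument is correct (modulo one caveat below), but it takes a genuinely different route from the paper. The paper's proof is purely cone-theoretic and never leaves $Y \times Z$: it observes that $D \cdot \pi_{Y}^{\ast}H_{Y}^{m-1} \cdot \pi_{Z}^{\ast}H_{Z}^{n-1}$ is a pseudoeffective $1$-cycle class (nef $=$ limit of ample, intersected with a surface class), so its vanishing against the ample $H$ forces the $1$-cycle itself to vanish; multiplying that zero class by $\pi_{Y}^{\ast}H_{Y}$ and by $\pi_{Z}^{\ast}H_{Z}$ then kills the only two monomials $\pi_{Y}^{\ast}H_{Y}^{m-1}\pi_{Z}^{\ast}H_{Z}^{n}$ and $\pi_{Y}^{\ast}H_{Y}^{m}\pi_{Z}^{\ast}H_{Z}^{n-1}$ that survive in $(\pi_{Y}^{\ast}H_{Y}+\pi_{Z}^{\ast}H_{Z})^{m+n-1}$ for dimension reasons, so $D$ pairs to zero with a power of an ample divisor and is therefore trivial. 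This avoids the Hodge index theorem, the N\'eron--Severi decomposition of a product, and Lefschetz entirely, and it works for arbitrary projective varieties $Y$ and $Z$ as stated. Your route, by contrast, correctly isolates the real difficulty in any restriction-to-a-surface argument --- namely that $N^{1}(Y\times Z) \to N^{1}(C_{Y}\times C_{Z})$ is far from injective, with the divisorial-correspondence summand being the delicate part --- and your treatment of the three summands (nef-killed-by-ample-power on each factor for $D_{Y}, D_{Z}$; injectivity of $\Hom(\mathrm{Alb}(Y),\Pic^{0}(Z)) \to \Hom(J(C_{Y}),J(C_{Z}))$ via surjectivity of $J(C_{Y}) \twoheadrightarrow \mathrm{Alb}(Y)$ and finiteness of $\ker(\Pic^{0}(Z) \to J(C_{Z}))$ for the cross term) is sound. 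The one caveat: your proof implicitly requires $Y$ and $Z$ to be smooth (Bertini smoothness of $C_{Y}, C_{Z}$, Hodge index on $S$, the N\'eron--Severi decomposition, and Lefschetz on $\Pic^{0}$), which is stronger than the lemma's hypothesis of ``projective varieties''; this is harmless for the paper's application (the factors there are $\Pic^{0}(C_{i})$, hence abelian varieties), but it means your argument proves a slightly less general statement, and at the cost of noticeably heavier machinery.
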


\begin{proof}
The ampleness of $H_{Y}$ and $H_{Z}$ implies that $\pi_{Y}^{\ast}H_{Y}^{m-1} \cdot \pi_{Z}^{\ast}H_{Z}^{n-1}$ is proportional to the numerical class of a surface in $Y \times Z.$  Since $D$ is nef, it is a limit of ample divisor classes, so $D \cdot \pi_{Y}^{\ast}H_{Y}^{m-1} \cdot \pi_{Z}^{\ast}H_{Z}^{n-1}$ is in the closed cone of curves in $N_{1}(Y \times Z).$  Given that its intersection with $H$ is zero by hypothesis, the fact that $H$ is in the interior of the nef cone of $X$ implies that $D \cdot \pi_{Y}^{\ast}H_{Y}^{m-1} \cdot \pi_{Z}^{\ast}H_{Z}^{n-1}=0.$  

It follows at once that $D \cdot \pi_{Y}^{\ast}H_{Y}^{m} \cdot \pi_{Z}^{\ast}H_{Z}^{n-1}=D \cdot \pi_{Y}^{\ast}H_{Y}^{m-1} \cdot \pi_{Z}^{\ast}H_{Z}^{n}=0.$  Consequently, we have
\begin{equation*}
\begin{tiny}
D \cdot (\pi_{Y}^{\ast}H_{Y} + \pi_{Z}^{\ast}H_{Z})^{m+n-1} = D \cdot \biggl(\binom{m+n-1}{m-1}\pi_{Y}^{\ast}H_{Y}^{m-1} \cdot \pi_{Z}^{\ast}H_{Z}^{n} + \binom{m+n-1}{m}\pi_{Y}^{\ast}H_{Y}^{m}\cdot \pi_{Z}^{\ast}H_{Z}^{n-1} \biggr)=0
\end{tiny}
\end{equation*}

Since $\pi_{Y}^{\ast}H_{Y} + \pi_{Z}^{\ast}H_{Z}$ is ample, the class $(\pi_{Y}^{\ast}H_{Y} + \pi_{Z}^{\ast}H_{Z})^{m+n-1}$ is proportional to the class of a complete intersection curve on $Y \times Z,$ and therefore lies in the interior of the cone of curves in $N_{1}(Y \times Z),$ so that our nef divisor $D$ is $0.$
\end{proof}

Specializing to the case where $X$ is an abelian variety, we now introduce the main tool in the proof of Theorem A.

\begin{defn}
\label{def:pont-prod}
If $X$ is an abelian variety of dimension $g \geq 1,$ $T$ is a subvariety of $X,$ and $1 \leq k \leq g,$ the $k-$th Pontryagin product $T^{\ast (k)}$ of $T$ is the image of $T^{k}$ under the addition map $\sigma^{k} : X^{k} \to X.$

\end{defn}

\noindent
Note that when $\dim(T)=1,$ the dimension of $T^{\ast (k)}$ is equal to ${\rm min}\{k,g\}.$  The following result is an immediate consequence of Lemma 1.9(a) in \cite{DELV}.  

\begin{lem}
\label{lem:delv}
Let $X$ be an abelian variety of dimension $g \geq 2,$ and let $\alpha \in N_{k}(X)$ be given, where $1 \leq k \leq g-1.$  If $H$ is an ample divisor class on $X$ and $T \subset X$ is a curve whose numerical class is proportional to $H^{g-1},$ then for $1 \leq k \leq g-1$ the numerical class of $T^{\ast (k)}$ is a positive multiple of $H^{g-k}.$ \hfill \qedsymbol 
\end{lem}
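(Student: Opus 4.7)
The plan is to reduce the statement to Lemma 1.9(a) of \cite{DELV}, which computes Pontryagin self-products of powers of an ample class on an abelian variety. The reduction requires only two steps: first, compare the cycle class of the image variety $T^{\ast(k)}$ with the class-level Pontryagin self-product $[T]^{\ast k}$; second, substitute the hypothesis $[T] \propto H^{g-1}$ and invoke the cited lemma.

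For the first step, I would set $[T]^{\ast k} := \sigma^{k}_{\ast}([T]\times\cdots\times[T])$, where $\sigma^{k} \colon X^{k}\to X$ denotes iterated addition. Since $\dim T^{\ast(k)} = \min\{k,g\} = k$ for $1\leq k\leq g-1$, the restriction $\sigma^{k}|_{T^{k}}\colon T^{k}\to T^{\ast(k)}$ is generically finite of some degree $d_{k}>0$, whence
$$[T]^{\ast k} \;=\; d_{k}\cdot [T^{\ast(k)}] \quad \text{in } N_{k}(X).$$

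For the second step, bilinearity of $\sigma^{k}_{\ast}$ together with the hypothesis $[T] = c\cdot H^{g-1}$ for some $c>0$ yields $[T]^{\ast k} = c^{k}\cdot (H^{g-1})^{\ast k}$. Lemma 1.9(a) of \cite{DELV} then asserts that $(H^{g-1})^{\ast k}$ is a positive rational multiple of $H^{g-k}$. Combining these displays shows that $[T^{\ast(k)}]$ is a positive multiple of $H^{g-k}$, as required.

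Since the substantive computation is performed in \cite{DELV}, there is no real obstacle in the argument. The only point to verify by hand is the strict positivity of the comparison constant $d_{k}$, i.e., that the addition map $T^{k}\to T^{\ast(k)}$ does not collapse dimension; this is precisely what the bound $k\leq g-1$ guarantees.
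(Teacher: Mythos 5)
Your proof is correct and takes essentially the same route as the paper, which offers no written argument at all and simply declares the lemma an immediate consequence of Lemma 1.9(a) of \cite{DELV}; your two reduction steps (comparing $\sigma^{k}_{\ast}[T^{k}]$ with $d_{k}[T^{\ast(k)}]$, then using multilinearity of the Pontryagin product) are exactly the implicit content. One small imprecision: the non-collapse of $\sigma^{k}|_{T^{k}}$ is not guaranteed by the bound $k \leq g-1$ alone (a curve contained in a translate of a proper abelian subvariety would have all its Pontryagin powers of bounded dimension); it is the hypothesis $[T] \propto H^{g-1}$ that forces $T$ to generate $X$, since otherwise $[T]$ would push forward to zero under the quotient by the subvariety $T$ generates, while $H^{g-1}$ does not.
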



\begin{prop}
\label{prop:bn-locus}
For $1 \leq d \leq g$ and $\cL \in {\rm Pic}^{d}(C),$ the numerical class of $V^{0}(\cL)$ in $N^{g-d}({\rm Pic}^{0}(C))$ is a positive rational multiple of $\theta^{g-d},$ where $\theta \in N^{1}({\rm Pic}^{0}(C))$ is the numerical class of the theta-divisor, and the image $V^{0}(\cL)^{-}$ of $V^{0}(\cL)$ under inversion on ${\rm Pic}^{0}(C)$ is numerically equivalent to $V^{0}(\cL).$ 
\end{prop}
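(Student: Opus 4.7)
The plan is to recognize $V^0(\cL)$ as a translate of a classical Brill--Noether locus and then invoke Poincaré's formula for the numerical class of $W_d(C) \subset {\rm Pic}^{d}(C)$.

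Concretely, fix an identification of ${\rm Pic}^{d}(C)$ with ${\rm Pic}^{0}(C)$ via translation by $\cL$, so that $\alpha \mapsto \cL \otimes \alpha$ identifies $V^{0}(\cL)$ with the image $W_{d}(C) = \{\cM \in {\rm Pic}^{d}(C) : h^{0}(\cM) > 0\}$ of the $d$-th symmetric product $C^{(d)}$ under the Abel--Jacobi map. Because numerical equivalence on the abelian variety ${\rm Pic}^{0}(C)$ is preserved by translations, it suffices to compute the class of $W_{d}(C)$. Poincaré's formula gives
\begin{equation*}
[W_{d}(C)] = \frac{\theta^{g-d}}{(g-d)!} \in N^{g-d}({\rm Pic}^{d}(C)),
\end{equation*}
which, transported back, shows that the class of $V^{0}(\cL)$ is the positive rational multiple $\frac{1}{(g-d)!}\theta^{g-d}$ of $\theta^{g-d}$ in $N^{g-d}({\rm Pic}^{0}(C))$.

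For the symmetry statement, observe that $V^{0}(\cL)^{-}$ is the image of $V^{0}(\cL)$ under the inversion morphism $[-1] : {\rm Pic}^{0}(C) \to {\rm Pic}^{0}(C)$, which is an isomorphism of degree one. Hence as cycles we have $[V^{0}(\cL)^{-}] = [-1]_{\ast}[V^{0}(\cL)]$, and on an abelian variety $[-1]^{\ast}$ acts on $H^{k}(-,\R)$ by $(-1)^{k}$, so it acts trivially on even-degree cohomology and in particular on numerical classes. Since $[-1]_{\ast} = [-1]^{\ast}$ (as $[-1]$ is an involution), it follows that $V^{0}(\cL)^{-} \equiv V^{0}(\cL)$ numerically.

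I do not expect any serious obstacle here: the result is essentially a translation-and-inversion argument on top of Poincaré's formula, which is standard (see, e.g., Arbarello--Cornalba--Griffiths--Harris). The only minor care needed is in identifying ${\rm Pic}^{d}(C)$ with ${\rm Pic}^{0}(C)$ compatibly on both sides, and in confirming that the symmetry of $\theta$ (so that $[-1]^{\ast}\theta \equiv \theta$ numerically) and of $W_{d}(C)^{-}$ agree with the translation-independent nature of numerical classes.
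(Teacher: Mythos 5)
Your proof is correct and takes essentially the same route as the paper: the paper's citation of Theorem (1.3) on p.~212 of [ACGH] is precisely Poincar\'e's formula $[W_{d}(C)]=\theta^{g-d}/(g-d)!$, applied after translating $V^{0}(\cL)$ into $W_{d}(C)\subset{\rm Pic}^{d}(C)$ exactly as you do. Your observation that $[-1]^{\ast}$ acts by $(-1)^{k}$ on $H^{k}$, hence trivially on the even-degree classes relevant here, is just a slightly more explicit version of the paper's appeal to the invariance of $\theta$ under inversion.
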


\begin{proof}
The statement about the numerical class of $V^{0}(\cL)$ follows from Theorem (1.3) on p. 212 of \cite{ACGH}, and the invariance of this class under inversion follows from the invariance of $\theta$ under inversion.
\end{proof}

\section{Proofs of Theorems A and B}

\noindent
Theorem A is an immediate consequence of the following stronger statement.

\begin{prop}
\label{prop:top-codim}
Let $X$ be a smooth projective variety of dimension $n \geq 2$ and irregularity $q \geq 1,$ and let $\cO_{X}(1)$ be an ample and globally generated line bundle on $X$ such that $V^{0}(\cO_{X}(1))$ contains a curve $T$ algebraically equivalent to a complete intersection of algebraically equivalent ample divisors on ${\rm Pic}^{0}(X)$.  Then if $\cF$ is a torsion-free sheaf on $X$ for which $H^{n}(\cF(1-n))=0,$ we have that ${\rm codim}(V^{n}(\cF)) \geq n.$
\end{prop}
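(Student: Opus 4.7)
The plan is to use the curve $T$ to show that $V^{n}(\cF)$ is disjoint from the $(n-1)$-fold Pontryagin product $T^{\ast(n-1)},$ and then to invoke Lemma \ref{lem:delv} to deduce that any component of $V^{n}(\cF)$ of codimension at most $n-1$ would be forced to meet $T^{\ast(n-1)},$ producing a contradiction. To begin, I would fix an ample class $H$ on ${\rm Pic}^{0}(X)$ for which $[T]$ is proportional to $H^{q-1}$ (so that $[T^{\ast(k)}]$ is accordingly a positive multiple of $H^{q-k}$); such an $H$ is precisely what the hypothesis on $T$ supplies.

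For the disjointness, I would pick $\alpha \in T^{\ast(n-1)},$ write $\alpha = \alpha_{1} + \cdots + \alpha_{n-1}$ with $\alpha_{i} \in T \subseteq V^{0}(\cO_{X}(1)),$ and for each $i$ select a nonzero $s_{i} \in H^{0}(\cO_{X}(1) \otimes \alpha_{i}).$ The product $s := s_{1} \cdots s_{n-1} \in H^{0}(\cO_{X}(n-1) \otimes \alpha)$ is then nonzero, so the inclusion $\cO_{X} \into \cO_{X}(n-1) \otimes \alpha$ it defines, tensored with $\cF(1-n)$ and combined with the torsion-freeness of $\cF,$ yields a short exact sequence
\begin{equation*}
0 \to \cF(1-n) \to \cF \otimes \alpha \to Q \to 0
\end{equation*}
in which $Q$ is supported on the zero scheme of $s$ and so has dimension at most $n-1.$ Since $H^{n}(Q)=0$ by Grothendieck vanishing, the long exact sequence in cohomology gives a surjection $H^{n}(\cF(1-n)) \onto H^{n}(\cF \otimes \alpha),$ and the hypothesis $H^{n}(\cF(1-n))=0$ then forces $\alpha \notin V^{n}(\cF).$ Hence $T^{\ast(n-1)} \cap V^{n}(\cF) = \empt.$

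To conclude, I would argue by contradiction: assume some irreducible component $V_{0}$ of $V^{n}(\cF)$ has dimension $d \geq q - n + 1.$ Writing $[T^{\ast(n-1)}] = c \cdot H^{q-n+1}$ in $N^{n-1}({\rm Pic}^{0}(X))$ (with $c > 0$) via Lemma \ref{lem:delv}, the intersection class
\begin{equation*}
[V_{0}] \cdot [T^{\ast(n-1)}] \ = \ c \cdot [V_{0}] \cdot H^{q-n+1}
\end{equation*}
is nonzero because capping further with $H^{d-q+n-1}$ yields the positive degree $H^{d} \cdot V_{0}$ by ampleness of $H.$ Since the refined intersection of the effective cycles $V_{0}$ and $T^{\ast(n-1)}$ is supported on $V_{0} \cap T^{\ast(n-1)},$ its non-vanishing in $N^{\ast}({\rm Pic}^{0}(X))$ would force $V_{0} \cap T^{\ast(n-1)} \neq \empt,$ contradicting the disjointness just established. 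Every component of $V^{n}(\cF)$ therefore has codimension at least $n.$

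The principal obstacle the plan must navigate is controlling the numerical class of $T^{\ast(n-1)}$; without the strong proportionality $[T] \propto H^{q-1}$ exploited in Lemma \ref{lem:delv}, the disjointness step would still go through, but the final intersection-theoretic argument would break down, since a generic Pontryagin product need not pair positively with arbitrary codimension-$(n-1)$ components of $V^{n}(\cF).$ The torsion-freeness of $\cF$ enters at only one point — to guarantee that multiplication by the nonzero section $s$ remains injective after tensoring — so this is the only place where the hypothesis on $\cF$ beyond $H^{n}(\cF(1-n))=0$ is consumed.
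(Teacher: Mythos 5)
Your proposal is correct and follows essentially the same route as the paper: establish $T^{\ast(n-1)} \cap V^{n}(\cF) = \emptyset$ via the short exact sequence induced by a divisor in $|\cO_{X}(n-1) \otimes \alpha|$ (the paper takes the sum $\sum_j D_j$ of divisors $D_j \in |\cO_X(1)\otimes\alpha_j|$, which is exactly the zero locus of your product section $s$), and then use Lemma \ref{lem:delv} to see that $T^{\ast(n-1)}$ must meet every subvariety of codimension at most $n-1$. The only point you elide is the case $n > q$, where Lemma \ref{lem:delv} (which needs $n-1 \leq q-1$) does not apply and the class $H^{q-n+1}$ is meaningless; there the paper simply observes that $T^{\ast(n-1)} = {\rm Pic}^{0}(X)$, so the disjointness you already proved yields $V^{n}(\cF) = \emptyset$ outright.
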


\begin{proof}
We consider the $(n-1)$st Pontryagin product of $T,$ which has the set-theoretic description
\begin{equation}
T^{\ast (n-1)} = \{\alpha_{1} \otimes \cdots \otimes \alpha_{n-1} : \alpha_{1}, \cdots ,\alpha_{n-1} \in T\}
\end{equation}
Fix $\overline{\alpha} := \alpha_{1} \otimes \cdots \otimes \alpha_{n-1} \in T^{\ast (n-1)}.$  Then for each $j$ there exists an effective divisor $D_{j} \in |\cO_{X}(1) \otimes \alpha_{j}|.$  If $D_{\overline{\alpha}}:= \sum_{j}D_{j},$ we have an exact sequence
\begin{equation}
0 \to \cF(1-n) \to \cF \otimes \overline{\alpha} \to \cF|_{D_{\overline{\alpha}}} \otimes \overline{\alpha} \to 0
\end{equation}
It follows at once from our hypothesis on $\cF$ that $H^{n}(\cF \otimes \overline{\alpha}) = 0.$  Letting $\overline{\alpha}$ vary over $T^{\ast (n-1)},$ we conclude that $V^{n}(\cF) \cap T^{\ast (n-1)} = \emptyset.$  

There are now two cases to consider.  If $n \leq q,$ then by Lemma \ref{lem:delv} and our hypothesis on $T,$ we have that $T^{\ast (n-1)}$ is an $(n-1)-$dimensional subvariety of ${\rm Pic}^{0}(X)$ whose cycle class is proportional to a product of ample classes by Lemma \ref{lem:delv}.  In particular, $T^{\ast (n-1)}$ must intersect every subvariety of ${\rm Pic}^{0}(X)$ having codimension at most $n-1.$  It follows that ${\rm codim}V^{n}(\cF) \geq n.$  On the other hand, if $n > q,$ then $T^{\ast (n-1)} = {\rm Pic}^{0}(X),$ so that $V^{n}(\cF) = \emptyset.$
\end{proof}

\begin{rem}
\label{rem:symmetric}
For every $n \geq 2$ there is an $n-$dimensional variety $X$ and a line bundle $\cO_{X}(1)$ on $X$ satisfying the hypothesis of Theorem A for which $V^{0}(\cO_{X}(1)) \neq {\rm Pic}^{0}(X).$  Let $C$ be a smooth projective curve of genus $g \geq 6,$ and let $X:=C^{(n)}$ be its $n-$th symmetric product.  The image of the injective map $f_{n} : {\rm Pic}^{0}(C) \to {\rm Pic}^{0}(C^{n}) \cong {\rm Pic}^{0}(C)^{n}$ defined by $f_{n}(\alpha) = \alpha^{\boxtimes n}$ is the locus invariant under the action of the symmetric group, so $f_{n}$ factors canonically through an isomorphism $f_{(n)} : {\rm Pic}^{0}(C) \to {\rm Pic}^{0}(X).$  If $\cO_{C}(1)$ is an ample and globally generated line bundle on $C$ of degree $d < g-3$, there is a line bundle $\cO_{X}(1)$ on $X$ whose pullback via the quotient map $\pi : C^{n} \to X$ is isomorphic to $\cO_{C}(1)^{\boxtimes n}.$  By the calculations in Section 6.1 of \cite{Iz} we have for all $\alpha \in {\rm Pic}^{0}(C)$ an isomorphism 
\begin{equation}
H^{0}(\cO_{X}(1) \otimes f_{(n)}(\alpha)) \cong {\rm Sym}^{n}H^{0}(\cO_{C}(1) \otimes \alpha).
\end{equation}
It follows that restricting $f_{(n)}$ to $V^{0}(\cO_{C}(1))$ induces an isomorphism $V^{0}(\cO_{C}(1)) \cong V^{0}(\cO_{X}(1)).$  Since the numerical class of $V^{0}(\cO_{C}(1))$ is proportional to $\theta^{g-d}$ by Proposition \ref{prop:bn-locus}, intersecting $V^{0}(\cO_{C}(1))$ with $d-1$ general divisors of numerical class $2\theta$ produces a curve in $V^{0}(\cO_{X}(1))$ whose numerical class is proportional to a power of an ample divisor.
\end{rem}

\noindent
\textit{Proof of Theorem B:}  By Lemma \ref{lem:basic-cont-reg}, we may assume without loss of generality that $\cF$ is 1-regular with respect to $\cO_{X}(1).$  Then $H^{1}(\cF)=0,$ and it is immediate that ${\rm codim}(V^{1}(\cF)) \geq 1.$  

Setting $n=2$ in the proof of Proposition \ref{prop:top-codim}, we can also conclude that $V^{2}(\cF) \cap T = \emptyset.$  Since ${\rm Pic}^{0}(X)$ is an abelian variety, its cones of nef and pseudoeffective divisors are equal, so the interiors of their duals in $N^{q-1}({\rm Pic}^{0}(X))$ coincide as well; consequently $T$ must intersect every codimension-1 subvariety of ${\rm Pic}^{0}(X).$  It follows that ${\rm codim}(V^{2}(\cF)) \geq 2.$ \hfill \qedsymbol

\medskip


\section{Products of Curves}

We now give some applications of Theorem B.  Observe that the hypothesis of the following result is satisfied when $\cO_{X}(1)$ is the Segre product of ample and globally generated line bundles on $C_1$ and $C_2.$

\begin{prop}
\label{prop:prod-curves}
Let $C_1$ and $C_2$ be smooth projective curves of respective genera $g_{1}$ and $g_{2}$, and let $\cO_{X}(1)$ be an ample and globally generated line bundle on $X := C_1 \times C_2$ such that $|\cO_{X}(1)(-F_{1}-F_{2})|$ is nonempty whenever $F_{1}$ and $F_{2}$ are fibers of the projections from $X$ to $C_{1}$ and $C_{2},$ respectively.  Then $(\star)$ has an affirmative answer for $(X,\cO_{X}(1)).$
\end{prop}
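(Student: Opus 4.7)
The plan is to verify the hypothesis of Theorem B by constructing an explicit curve $T \subseteq V^{0}(\cO_{X}(1))$ whose numerical class lies in the interior of the cone of curves of $\mathrm{Pic}^{0}(X)$. Setting aside the trivial case $g_{1}=g_{2}=0$ (where $\mathrm{Pic}^{0}(X)=0$ and $(\star)$ is vacuous), I identify $\mathrm{Pic}^{0}(X) \cong J_{1} \times J_{2}$ via pullback along the two projections, where $J_{i} := \mathrm{Pic}^{0}(C_{i})$. Fix base points $x_{0} \in C_{1}, y_{0} \in C_{2}$ with corresponding fibers $F_{1}^{0}, F_{2}^{0}$, and consider the inverted Abel-Jacobi maps $\phi_{i}: C_{i} \to J_{i}$ sending $x \mapsto [\cO_{C_{i}}(x_{0}-x)]$.

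The first step is to verify that $V^{0}(\cO_{X}(1))$ contains the closed subvariety $S := \phi_{1}(C_{1}) \times \phi_{2}(C_{2}) \subseteq J_{1} \times J_{2}$. For a point $\alpha \in S$ corresponding to $(\phi_{1}(x), \phi_{2}(y))$, the twist $\cO_{X}(1) \otimes \alpha$ is canonically isomorphic to
\begin{equation*}
\bigl(\cO_{X}(1) \otimes p_{1}^{\ast}\cO_{C_{1}}(-x) \otimes p_{2}^{\ast}\cO_{C_{2}}(-y)\bigr) \otimes \cO_{X}(F_{1}^{0}+F_{2}^{0}).
\end{equation*}
The first tensor factor has a nonzero section by the proposition's hypothesis applied to $F_{1}=p_{1}^{-1}(x), F_{2}=p_{2}^{-1}(y)$, and the second has the defining section of the effective divisor $F_{1}^{0}+F_{2}^{0}$. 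Their product is a nonzero section of $\cO_{X}(1)\otimes \alpha$, placing $\alpha$ in $V^{0}(\cO_{X}(1))$.

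If $g_{1}, g_{2} \geq 1$, then each $\phi_{i}$ is an embedding and $S$ is a surface in $\mathrm{Pic}^{0}(X)$; I cut $S$ by a general ample divisor $H$ on $\mathrm{Pic}^{0}(X)$ to obtain a curve $T := S \cap H \subseteq V^{0}(\cO_{X}(1))$. By the Poincaré formula, $[\phi_{i}(C_{i})] = \theta_{i}^{g_{i}-1}/(g_{i}-1)!$ in $N^{g_{i}-1}(J_{i})$ (where $\theta_{i}$ is the theta class of $J_{i}$), so $[S]$ is a positive multiple of $\pi_{1}^{\ast}\theta_{1}^{g_{1}-1} \cdot \pi_{2}^{\ast}\theta_{2}^{g_{2}-1}$. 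For any nonzero nef class $D$ on $\mathrm{Pic}^{0}(X)$, the intersection number $[T]\cdot D$ is then a positive multiple of $D \cdot H \cdot \pi_{1}^{\ast}\theta_{1}^{g_{1}-1} \cdot \pi_{2}^{\ast}\theta_{2}^{g_{2}-1}$, which is nonzero by Lemma \ref{lem:pi-exc} and hence strictly positive (as $D$ is nef and the remaining factors represent an effective $1$-cycle class). Thus $[T]$ lies in the interior of the cone of curves, and Theorem B finishes the argument. In the remaining case (say $g_{2}=0, g_{1} \geq 1$), the hypothesis on $F_{2}$ is automatic; $S = \phi_{1}(C_{1})$ is already a curve in $\mathrm{Pic}^{0}(X)=J_{1}$ whose class $\theta_{1}^{g_{1}-1}/(g_{1}-1)!$ lies in the interior of the cone of curves of $J_{1}$ by standard Jacobian theory, so I take $T := S$ and again invoke Theorem B.

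The main obstacle is twofold: (a) passing from what the hypothesis naturally controls -- the $V^{0}$ locus of the shifted bundle $\cO_{X}(1)(-F_{1}^{0}-F_{2}^{0})$ -- back to $V^{0}(\cO_{X}(1))$ itself, which is handled by the section-multiplication trick above; and (b) securing the numerical positivity of $[T]$, which is exactly what Lemma \ref{lem:pi-exc} was tailored to deliver for products of abelian varieties.
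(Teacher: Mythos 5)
Your proposal is correct and follows essentially the same route as the paper: both exhibit the product of (inverted) Abel--Jacobi images inside $V^{0}(\cO_{X}(1))$ via the section-multiplication trick, cut by a general ample divisor to get the curve $T$, and use the Poincar\'e formula together with Lemma \ref{lem:pi-exc} to place $[T]$ in the interior of the cone of curves before invoking Theorem B. Your explicit treatment of the degenerate cases $g_i=0$ is a minor addition the paper leaves implicit.
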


\begin{proof}
It suffices to construct a curve in $V^{0}(\cO_{X}(1))$ satisfying the hypothesis of Theorem B.  For $i=1,2$ let $p_{i} : X \to C_{i}$ and $q_{i} : {\rm Pic}^{0}(X) \cong {\rm Pic}^{0}(C_{1}) \times {\rm Pic}^{0}(C_{2}) \to {\rm Pic}^{0}(C_{i})$ be projection maps, and fix a point $x_{i} \in C_{i}.$  If $F_{i} = p_{i}^{-1}(x_{i}),$ our hypothesis on $\cO_{X}(1)$ implies that for $\alpha_{1} \in V^{0}(\cO_{C_1}(x_{1}))$ and $\alpha_{2} \in V^{0}(\cO_{C_{2}}(x_{2}))$ we have   
\begin{equation}
\label{eq:alphas}
0 \neq H^{0}(\cO_{X}(1)(-F_{1}-F_{2}) \otimes p_{1}^{\ast}\alpha_{1}^{\vee} \otimes p_{2}^{\ast}\alpha_{2}^{\vee}) \subseteq H^{0}(\cO_{X}(1) \otimes  p_{1}^{\ast}\alpha_{1}^{\vee} \otimes p_{2}^{\ast}\alpha_{2}^{\vee})
\end{equation} 
For each $i$ the locus $V^{0}(\cO_{C_i}(x_{i}))$ is the image of $C_{i}$ under an Abel-Jacobi embedding in ${\rm Pic}^{0}(C_{i})$.  The canonical identification of ${\rm Pic}^{0}(C_{1}) \times {\rm Pic}^{0}(C_{2})$ with ${\rm Pic}^{0}(X)$ is induced by the map $(\beta_{1}, \beta_{2}) \mapsto p_{1}^{\ast}\beta_{1} \otimes p_{2}^{\ast}\beta_{2},$ so if $V^{0}(\cO_{C_{i}}(x_{i}))^{-}$ is the image of $V^{0}(\cO_{C_{i}}(x_{i}))$ under inversion on ${\rm Pic}^{0}(C_{i})$ we have from (\ref{eq:alphas}) that 
\begin{equation}
V^{0}(\cO_{C_{1}}(x_{1}))^{-} \times V^{0}(\cO_{C_{2}}(x_{1}))^{-} \cong q_{1}^{-1}V^{0}(\cO_{C_{1}}(x_{1}))^{-} \cap q_{2}^{-1}V^{0}(\cO_{C_{2}}(x_{1}))^{-} \subseteq V^{0}(\cO_{X}(1)).
\end{equation}
Let $H$ be a very ample divisor on ${\rm Pic}^{0}(X).$  Then the numerical cycle class defined by 
\begin{equation}
\gamma := H \cdot q_{1}^{\ast}[V^{0}(\cO_{C_{1}}(x_{1}))^{-}] \cdot q_{2}^{\ast}[V^{0}(\cO_{C_{2}}(x_{2}))^{-}] \in N_{1}(X)
\end{equation}
represents a curve $\Gamma \subseteq V^{0}(\cO_{X}(1))$ obtained by intersecting the surface $q_{1}^{-1}V^{0}(\cO_{C_{1}}(x_{1}))^{-} \cap q_{2}^{-1}V^{0}(\cO_{C_{2}}(x_{1}))^{-}$ with a general divisor linearly equivalent to $H.$  We will be done once we show that $\gamma$ has positive intersection with every nonzero pseudoeffective divisor on ${\rm Pic}^{0}(X).$  

Suppose $D$ is a pseudoeffective divisor on ${\rm Pic}^{0}(X)$ such that $D \cdot \gamma = 0.$  Since ${\rm Pic}^{0}(X)$ is an abelian variety, $D$ is nef.  By Proposition \ref{prop:bn-locus} the classes $[V^{0}(\cO_{C_1}(x_{1}))^{-}]$ and $[V^{0}(\cO_{C_2}(x_{2}))^{-}]$ are proportional to products of ample divisor classes on ${\rm Pic}^{0}(C_{1})$ and ${\rm Pic}^{0}(C_{2}),$ respectively; it then follows from Lemma \ref{lem:pi-exc} that $D = 0.$ 
\end{proof}

Combining the proof of Theorem B with the discussion from Remark \ref{rem:symmetric} yields the following result for symmetric squares of curves.

\begin{prop}
\label{prop:sym-sq}
Let $C$ be a smooth projective curve of genus $g \geq 6,$ and let $\cO_{C}(1)$ be an ample and globally generated line bundle of degree $d < g-3$ on $C.$  If $X = C^{(2)}$ and $\cO_{X}(1)$ is a line bundle on $X$ whose pullback via the quotient map $\pi : C^{2} \to X$ is isomorphic to $\cO_{C}(1)^{\boxtimes 2},$ then $(\star)$ has an affirmative answer for $(X,\cO_{X}(1))$. \hfill \qedsymbol
\end{prop}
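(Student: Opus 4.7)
The plan is to produce a curve $T \subseteq V^{0}(\cO_{X}(1))$ whose numerical class lies in the interior of the cone of curves of ${\rm Pic}^{0}(X)$, and then invoke Theorem B. The surface hypothesis is immediate because $X = C^{(2)}$ is a smooth projective surface with $h^{1}(\cO_{X}) = g \geq 6 > 0$, and $\cO_{X}(1)$ is ample and globally generated by finite descent along $\pi : C^{2} \to X$ of $\cO_{C}(1)^{\boxtimes 2}$. The numerical hypotheses $g \geq 6$ and $d < g-3$ are precisely those of Remark \ref{rem:symmetric}, so its content is available with $n = 2$.

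Concretely, I would specialize Remark \ref{rem:symmetric} to $n = 2$ to obtain an isomorphism $f_{(2)} : {\rm Pic}^{0}(C) \to {\rm Pic}^{0}(X)$ restricting to an isomorphism $V^{0}(\cO_{C}(1)) \cong V^{0}(\cO_{X}(1))$. By Proposition \ref{prop:bn-locus}, $[V^{0}(\cO_{C}(1))] \in N^{g-d}({\rm Pic}^{0}(C))$ is a positive multiple of $\theta^{g-d}$, so $V^{0}(\cO_{C}(1))$ has dimension $d \geq 1$. I would then build a concrete curve $T_{C} \subseteq V^{0}(\cO_{C}(1))$ as follows: if $d = 1$, take $T_{C} := V^{0}(\cO_{C}(1))$, whose class is proportional to $\theta^{g-1}$; if $d \geq 2$, take $T_{C}$ to be the intersection of $V^{0}(\cO_{C}(1))$ with $d-1$ general divisors numerically equivalent to $2\theta$, so that $[T_{C}]$ is a positive multiple of $\theta^{g-d} \cdot (2\theta)^{d-1} = 2^{d-1}\theta^{g-1}$. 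Setting $T := f_{(2)}(T_{C}) \subseteq V^{0}(\cO_{X}(1))$ and using that $f_{(2)}$ is an isomorphism of abelian varieties, $f_{(2)}$ transports $\theta$ to an ample class $H$ on ${\rm Pic}^{0}(X)$, and $[T]$ becomes a positive multiple of $H^{g-1}$.

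To close the argument via Theorem B I then have to verify that $[T]$ lies in the interior of the cone of curves of ${\rm Pic}^{0}(X)$, and this is the only step that carries any real content. As already used at the end of the proof of Theorem B, on an abelian variety the nef and pseudoeffective cones of divisors coincide, so their dual cones in $N_{1}$ do as well; the top power of the ample class $H$ pairs strictly positively with every nonzero pseudoeffective divisor, putting $[T]$ in the interior of the cone of curves. Applying Theorem B to the pair $(X,\cO_{X}(1))$ and this curve $T$ finishes the proof. I don't expect a serious obstacle beyond this routine transfer of positivity through the isomorphism $f_{(2)}$, since the proposition is essentially an assembly of Remark \ref{rem:symmetric} and Theorem B.
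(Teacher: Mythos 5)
Your proposal is correct and follows the same route as the paper, whose proof consists precisely of combining Remark \ref{rem:symmetric} (with $n=2$) with Theorem B. The details you supply --- the $d=1$ case, the transport of $\theta$ through $f_{(2)}$, and the verification that a positive multiple of $H^{g-1}$ lies in the interior of the cone of curves because nef and pseudoeffective divisor classes coincide on an abelian variety --- are exactly the ones the paper leaves implicit.
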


\begin{prop}
\label{prop:isog-prod}
Let $C_{1}$ and $C_{2}$  be nonisomorphic smooth projective curves that do not admit nontrivial correspondences, and let $G$ be a finite abelian group which acts freely on $C_{1} \times C_{2}$ and faithfully on $C_1$ and $C_2.$  If $X: = (C_{1} \times C_{2})/G$ and $\cO_{X}(1)$ is an ample and globally generated line bundle on $X,$ then $(\star)$ has an affirmative answer for $(X,\cO_{X}(1)).$ 
\end{prop}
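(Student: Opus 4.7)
The plan is to apply Theorem B by producing a curve $T\subseteq V^{0}(\cO_{X}(1))$ in ${\rm Pic}^{0}(X)$ whose numerical class lies in the interior of the cone of curves. Let $Y := C_{1}\times C_{2}$ with projections $p_i:Y\to C_i$, and let $f_{i}:C_{i}\to C_{i}' := C_{i}/G$ be the quotient maps. The étale Galois cover $\pi:Y\to X$ descends to a finite surjective morphism $\bar p = (\bar p_{1},\bar p_{2}): X\to C_{1}'\times C_{2}'$ of degree $|G|$, with $\bar p\circ\pi = (f_{1},f_{2})$.  Since $C_{1}, C_{2}$ admit no nontrivial correspondences, the same holds for $C_{1}', C_{2}'$ (any homomorphism $J(C_{1}')\to J(C_{2}')$ would extend up to isogeny to a homomorphism $J(C_{1})\to J(C_{2})$), so ${\rm Pic}^{0}(C_{1}'\times C_{2}') = J(C_{1}')\times J(C_{2}')$ and $\bar p^{*}:J(C_{1}')\times J(C_{2}')\to {\rm Pic}^{0}(X)$ is an isogeny of abelian varieties of dimension $q(X)=g(C_{1}')+g(C_{2}')$.

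Next I would analyze sections.  By no-correspondence we may write $\pi^{*}\cO_{X}(1) = L_{1}\boxtimes L_{2}$ with each $L_{i}$ ample and globally generated on $C_{i}$. Taking $G$-invariants in Künneth, for $(\alpha_{1},\alpha_{2}) \in J(C_{1}')\times J(C_{2}')$ one has
\begin{equation*}
H^{0}\bigl(X,\cO_{X}(1)\otimes \bar p^{*}(\alpha_{1}\boxtimes \alpha_{2})\bigr) \;=\; \bigoplus_{\chi} H^{0}(\cP_{1,\chi}\otimes \alpha_{1})\otimes H^{0}(\cP_{2,\chi^{-1}}\otimes \alpha_{2}),
\end{equation*}
where $\cP_{i,\chi}$ is the line bundle on $C_{i}'$ arising from the $\chi$-isotypic summand of the push-forward $(f_{i})_{*}L_{i}$ (a rank-$|G|$ locally free sheaf whose character eigensheaves are rank-$1$ since $G$ is abelian). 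Since $\cO_{X}(1)$ is ample and globally generated on the surface $X$, $h^{0}(X,\cO_{X}(1))\geq 3$, and evaluation at $(\alpha_{1},\alpha_{2})=(0,0)$ gives $\sum_{\chi} h^{0}(\cP_{1,\chi})\cdot h^{0}(\cP_{2,\chi^{-1}}) \geq 3$. Hence some character $\chi_{0}$ satisfies $h^{0}(\cP_{1,\chi_{0}})\geq 1$ and $h^{0}(\cP_{2,\chi_{0}^{-1}})\geq 1$.

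Now I would build the curve in direct analogy with Proposition \ref{prop:prod-curves}. By Proposition \ref{prop:bn-locus} (extended to the boundary cases $d\geq g$, where the whole Jacobian plays the role of the Brill-Noether locus), the loci $V^{0}(\cP_{1,\chi_{0}})\subseteq J(C_{1}')$ and $V^{0}(\cP_{2,\chi_{0}^{-1}})\subseteq J(C_{2}')$ have numerical classes proportional to powers of their respective theta classes $\theta_{i}$; intersecting each with the appropriate number of general translates of $\theta_{i}$ yields curves $W_{i}'$ of class proportional to $\theta_{i}^{g(C_{i}')-1}$.  By the displayed identity, the surface $\bar p^{*}(W_{1}'\times W_{2}')$ is contained in $V^{0}(\cO_{X}(1))$, and intersecting it with a general very ample divisor $H'$ on ${\rm Pic}^{0}(X)$ produces the desired curve $T$.

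The main obstacle will be verifying that $[T]$ lies in the interior of the cone of curves of ${\rm Pic}^{0}(X)$.  The no-correspondence assumption ensures that $N^{1}(J(C_{1}')\times J(C_{2}'))$ decomposes as $q_{1}^{*}N^{1}(J(C_{1}'))\oplus q_{2}^{*}N^{1}(J(C_{2}'))$, so every nef divisor is a sum of pullbacks from the factors; a Lemma \ref{lem:pi-exc}-style intersection argument, exploiting the product-of-ample-classes structure of $[W_{1}'\times W_{2}']$ together with the ample $H'$, forces $[T]$ into the interior of the cone of curves of $J(C_{1}')\times J(C_{2}')$. Since $\bar p^{*}$ is an isogeny between abelian varieties it preserves this interior, so $[T]$ also lies in the interior of the cone of curves of ${\rm Pic}^{0}(X)$. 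Theorem B then yields that $\cF$ is a GV-sheaf.
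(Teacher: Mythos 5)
Your strategy is genuinely different from the paper's. You work ``downstairs,'' descending to $C_1'\times C_2'=(C_1/G)\times(C_2/G)$ and trying to feed Theorem B a positive curve in $V^0(\cO_X(1))$ built from character eigensheaves of $(f_i)_*L_i$. The paper instead works ``upstairs'': since $\pi$ is \'etale with $\pi_*\cO_{C_1\times C_2}\cong\cO_X\oplus\cG$ for $\cG$ a sum of nontrivial torsion line bundles, it picks a generic $\eta\in{\rm Pic}^0(X)$ so that $\cF':=\cF\otimes\eta$ and $\pi^*\cF'$ are both $1$-regular, notes that the no-correspondence hypothesis forces $\pi^*\cO_X(1)\cong\cO_{C_1}(1)\boxtimes\cO_{C_2}(1)$, applies Proposition \ref{prop:prod-curves} to $\pi^*\cF'$ on $C_1\times C_2$, and then transfers the GV property back down using $\pi^*V^i(\cF')\subseteq V^i(\pi^*\cF')$ and finiteness of $\pi^*$ on ${\rm Pic}^0$. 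That route entirely sidesteps the representation-theoretic analysis of $H^0(\cO_X(1))$ that your argument depends on.

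The dependence matters, because there is a genuine gap in your construction of $T$. The count $\sum_\chi h^0(\cP_{1,\chi})h^0(\cP_{2,\chi^{-1}})\geq 3$ only produces a character $\chi_0$ with both $h^0$'s positive; it does not give you both \emph{degrees} positive. If, say, $\cP_{1,\chi_0}\cong\cO_{C_1'}$ (degree $0$ with a section), then $V^0(\cP_{1,\chi_0})$ is a single point of $J(C_1')$, Proposition \ref{prop:bn-locus} does not apply (it requires $1\leq d\leq g$), $W_1'$ degenerates to a point, and your curve $T$ satisfies $T\cdot q_1^*\theta_1=0$ against the nonzero nef class $q_1^*\theta_1$ --- so $[T]$ is \emph{not} interior to the cone of curves and Theorem B cannot be invoked. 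Note that the covers $f_i:C_i\to C_i'$ need not be \'etale (only the product action is free), so the eigensheaves can have essentially arbitrary degrees, and global generation of $\cO_X(1)$ does not obviously exclude the bad case: it only forces the sections $\{s_\chi\}$ coming from the usable characters to have no common zero on $C_1$, which can happen even if every usable $\cP_{1,\chi}$ is trivial, provided there are at least two of them. A secondary, more technical issue: your isotypic decomposition presumes $G$-linearizations on $L_1$ and $L_2$ separately whose product is the canonical one on $\pi^*\cO_X(1)$; each $L_i$ is $G$-invariant, but for non-cyclic abelian $G$ the linearization can be obstructed in $H^2(G,\C^*)$, and you only know the sum of the two obstructions vanishes. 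Both problems evaporate in the paper's approach, which is why I'd recommend it here.
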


\begin{proof}
It suffices to exhibit $\eta \in {\rm Pic}^{0}(A)$ such that $\cF \otimes \eta$ is a GV-sheaf.  Let $\pi: C_{1} \times C_{2} \to X$ be the quotient map; this is \'{e}tale by our hypothesis on the action of $G.$  By (1.1) of \cite{Pa}, we have that $\pi_{\ast}\cO_{C_{1} \times C_{2}} \cong \cO_{X} \oplus \cG,$ where $\cG$ is a direct sum of nontrivial elements of ${\rm Pic}^{0}(X).$  

By upper semicontinuity and Riemann-Roch, we may then choose $\eta \in {\rm Pic}^{0}(X)$ such that $h^{i}(\cF(j) \otimes \eta \otimes \xi)= h^{i}(\cF(j))$ for $0 \leq i \leq 2, j \in \Z,$ and any $\xi \in {\rm Pic}^{0}(X)$ which is a direct summand of $\cG.$  It follows that $\cF' := \cF \otimes \eta$ is 1-regular with respect to $\cO_{X}(1)$.  We will now show that $\cF'$ (and therefore $\cF$) is a GV-sheaf.

It is enough to show that $\pi^{\ast}\cF'$ (which is torsion-free since $\pi$ is \'{e}tale) is 1-regular with respect to $\pi^{\ast}\cO_{X}(1)$.  Granting this for the moment, the absence of nontrivial correspondences on $C_{1} \times C_{2}$ implies the existence of ample and globally generated line bundles $\cO_{C_{1}}(1),\cO_{C_{2}}(1)$ on $C_{1},C_{2}$ resp.~ such that 
\begin{equation}
\pi^{\ast}\cO_{X}(1) \cong \cO_{C_1}(1) \boxtimes \cO_{C_2}(1)
\end{equation}
Since $\pi^{\ast}\cF'$ is assumed to be 1-regular with respect to $\cO_{C_1}(1) \boxtimes \cO_{C_2}(1),$ Proposition \ref{prop:prod-curves} implies that $\pi^{\ast}\cF'$ is a GV-sheaf on $C_{1} \times C_{2}.$  For any ${\eta}' \in {\rm Pic}^{0}(X),$ we have 
\begin{equation}
H^{i}(\cF' \otimes {\eta}') \subset H^{i}(\cF' \otimes {\eta}' \otimes \pi_{\ast}\cO_{C_{1} \times C_{2}}) \cong H^{i}(\pi^{\ast}\cF' \otimes \pi^{\ast}{\eta}') 
\end{equation}
\noindent
Consequently $\pi^{\ast}(V^{i}(\cE'(-1))) \subseteq V^{i}(\pi^{\ast}\cF').$  Since the map $\pi^{\ast} : {\rm Pic}^{0}(X) \to {\rm Pic}^{0}(C_{1} \times C_{2})$ is finite onto its image, we have
\begin{equation}
{\rm codim}(V^{i}(\cF')) = {\rm codim}(\pi^{\ast}(V^{i}(\cF'))) \geq {\rm codim}(V^{i}(\pi^{\ast}\cF')) \geq i.
\end{equation}
\noindent
so that $\cF'$ is a GV-sheaf as claimed.

To check that $\pi^{\ast}\cF'$ is 1-regular with respect to $\pi^{\ast}\cO_{X}(1),$ note that for $0 \leq i \leq 2$ and $j \in \Z$ we have that
\begin{equation}
H^{i}(\pi^{\ast}\cF'(j)) \cong H^{i}(\cF'(j) \otimes \pi_{\ast}\cO_{C_{1} \times C_{2}}) \cong H^{i}(\cF'(j)) \oplus H^{i}(\cF'(j) \otimes \cG)
\end{equation}
Due the the vanishings guaranteed by our definition of $\cF',$ we have that $\pi^{\ast}\cF'$ is 1-regular as desired.

\end{proof}

\section{Ruled Threefolds}

The purpose of this final section is to prove the following result.  Note that when the curve $C$ has large genus, there are many possibilities for $\cO_{X}(1)$ that are not adjunctions. 

\begin{prop}
\label{thm:3fold}
Let $C$ be a smooth projective curve of genus $g \geq 1,$ let $\cV$ be an ample and globally generated vector bundle of rank 3 on $C,$ and let $X = \P(\cV)$ be the projectivization with structure map $\pi : X \to C.$  If $\cO_{C}(1)$ is a globally generated line bundle of degree $d \geq g$ on $C$ and $\cO_{X}(1) := \pi^{\ast}\cO_{C}(1) \otimes \cO_{\P(\cV)}(1),$ then $(\star)$ has an affirmative answer for $(X,\cO_{X}(1)).$
\end{prop}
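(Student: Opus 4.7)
The plan is to exhibit a sufficiently positive curve $T \subseteq V^0(\cO_X(1))$ and then verify $\codim V^i(\cF) \geq i$ for $i = 1, 2, 3$. First, since $X = \P(\cV)$ is a $\P^2$-bundle over $C$, pullback identifies $\Pic^0(X)$ with $J(C)$; by the projection formula $V^0(\cO_X(1)) = \{\alpha \in J(C) : H^0(C, \cV \otimes \cO_C(1) \otimes \alpha) \neq 0\}$, and the hypothesis $d \geq g$ combined with $\cV$ being ample and globally generated forces $\chi(\cV \otimes \cO_C(1) \otimes \alpha) > 0$ for every $\alpha$, so $V^0(\cO_X(1)) = J(C)$. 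An Abel-Jacobi image $T \subseteq J(C)$ of $C$ has class proportional to $\theta^{g-1}$ by Poincar\'e's formula (where $\theta$ is the theta class on $J(C)$), so $T \subseteq V^0(\cO_X(1))$ with $[T]$ in the interior of the cone of curves. By Lemma~\ref{lem:basic-cont-reg}, we may replace $\cF$ by a twist and assume $\cF$ is $1$-regular on $X$.

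The short ranges $V^1$ and $V^3$ follow from a clean vanishing. Since $\cF$ is $1$-regular, $\cF(1)$ is globally generated, giving a surjection $\cO_X(-1)^{N} \twoheadrightarrow \cF$ with kernel $K$. Because $H^{\bullet}(\P^2, \cO_{\P^2}(-1)) = 0$, the projection formula yields $R\pi_{*}(\cO_X(-1) \otimes \pi^{*}\alpha) = 0$ for every $\alpha \in J(C)$, and hence $H^{\bullet}(X, \cO_X(-1) \otimes \pi^{*}\alpha) = 0$. The associated long exact sequence forces $H^3(X, \cF \otimes \pi^{*}\alpha) \cong H^4(X, K \otimes \pi^{*}\alpha) = 0$ for every $\alpha$, so $V^3(\cF) = \emptyset$. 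Combined with $H^1(\cF) = 0$, this gives $\codim V^i(\cF) \geq i$ for $i \in \{1,3\}$.

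The heart of the argument is $\codim V^2(\cF) \geq 2$. Since $[T]$ is proportional to $\theta^{g-1}$ and nef equals pseudoeffective on the abelian variety $J(C)$, the curve $T$ intersects every codimension-$1$ subvariety of $J(C)$; it therefore suffices to show $V^2(\cF) \cap T = \emptyset$. For each $\alpha \in T$, Bertini supplies a smooth divisor $D_\alpha \in |\cO_X(1) \otimes \pi^{*}\alpha|$, a ruled surface $\P(\cW_\alpha) \to C$ with $h^1(\cO_{D_\alpha}) = g$, where $\cW_\alpha$ is the rank-$2$ cokernel of the associated inclusion $\cO_C(-1) \otimes \alpha^{-1} \hookrightarrow \cV$. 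The divisor short exact sequence, combined with $H^2(\cF(-1)) = H^3(\cF(-1)) = 0$ (both from $1$-regularity), yields an isomorphism $H^2(\cF \otimes \pi^{*}\alpha) \cong H^2(D_\alpha, \cF|_{D_\alpha} \otimes \pi^{*}\alpha)$. I would then apply Theorem~B to the ruled surface $D_\alpha$ with the sheaf $\cF|_{D_\alpha}$: the continuous $1$-regularity of $\cF|_{D_\alpha}$ on $D_\alpha$ can be extracted from the SES together with the $1$-regularity of $\cF$ on $X$, and the hypothesis on $V^0(\cO_X(1)|_{D_\alpha})$ is verified by the same pushforward and positivity argument as in the setup, so that an Abel-Jacobi image of $C$ sits in $V^0(\cO_X(1)|_{D_\alpha})$ with class in the interior of the cone of curves of $\Pic^0(D_\alpha) = J(C)$.

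The main obstacle is that Theorem~B yields only the codimension-$2$ bound $\codim V^2(\cF|_{D_\alpha}) \geq 2$ in $\Pic^0(D_\alpha)$, whereas what is needed is the vanishing $H^2(D_\alpha, \cF|_{D_\alpha} \otimes \pi^{*}\alpha) = 0$ at the specific point $\pi^{*}\alpha$. Bridging this gap is the key technical hurdle, and I expect it to require either a careful parametric/Bertini argument over the family $\{D_\alpha\}_{\alpha \in T}$ combined with the positivity of $[T]$, or else a refinement producing an explicit locus on which $H^2(\cF|_{D_\alpha} \otimes -)$ vanishes, perhaps by leveraging the Mumford-type multiplication-map surjections forced by the $1$-regularity of $\cF$ on $X$.
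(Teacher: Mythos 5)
Your setup (identifying ${\rm Pic}^{0}(X)$ with ${\rm Pic}^{0}(C)$ and showing $V^{0}(\cO_{X}(1))={\rm Pic}^{0}(X)$ via $d\geq g$), your handling of $V^{1}$, and your argument that $V^{3}(\cF)=\emptyset$ via the resolution $\cO_{X}(-1)^{N}\twoheadrightarrow\cF$ and the vanishing $R\pi_{\ast}\cO_{\P(\cV)}(-1)=0$ are all correct (the last is a clean alternative to the paper's dimension count). But the $V^{2}$ step is where the content of the proposition lies, and your proposal does not close it: as you acknowledge, Theorem B applied to the surface $D_{\alpha}$ only yields ${\rm codim}\,V^{2}(\cF|_{D_{\alpha}})\geq 2$ inside ${\rm Pic}^{0}(D_{\alpha})$, whereas you need the vanishing of $H^{2}(\cF|_{D_{\alpha}}\otimes\pi^{\ast}\alpha)$ at the one point $\alpha$ that was used to produce $D_{\alpha}$ in the first place. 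A generic-vanishing statement cannot rule out that this particular $\alpha$ lands in the bad locus, and since $\alpha$ varies with $D_{\alpha}$ there is no obvious parametric or Bertini argument that decouples the two. (There are also smaller unaddressed points: a general member of $|\cO_{X}(1)\otimes\pi^{\ast}\alpha|$ is a ruled surface only when the corresponding section of $\cV\otimes\cO_{C}(1)\otimes\alpha$ is nowhere vanishing, and $\cF|_{D_{\alpha}}$ need not be torsion-free.) So the proposal as written is a genuinely incomplete proof.

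The paper's proof avoids this circularity by an explicit choice of a \emph{reducible} divisor $D_{\alpha}=\pi^{-1}(D'_{\alpha})+Z$ with $D'_{\alpha}\in|\cO_{C}(1)\otimes\alpha|$ and $Z\in|\cO_{\P(\cV)}(1)|$ \emph{fixed independently of} $\alpha$. After the same reduction $H^{2}(\cF\otimes\pi^{\ast}\alpha)\cong H^{2}((\cF\otimes\pi^{\ast}\alpha)|_{D_{\alpha}})$, a Mayer--Vietoris sequence splits the computation into: (i) the fibral part $\pi^{-1}(D'_{\alpha})$, a disjoint union of planes on which $\pi^{\ast}\alpha$ is trivial and where global generation of $\cF(1)$ kills $H^{1}$ on the lines $\pi^{-1}(D'_{\alpha})\cap Z$; and (ii) the fixed surface $Z$, where $H^{2}((\cF\otimes\pi^{\ast}\alpha)|_{Z})\cong H^{1}(R^{1}\pi_{\ast}(\cF|_{Z})\otimes\alpha)$ and Grauert's theorem gives $R^{1}\pi_{\ast}(\cF|_{Z})=0$ from the fiberwise $H^{1}$-vanishing. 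This yields the pointwise vanishing $H^{2}(\cF\otimes\pi^{\ast}\alpha)=0$ for \emph{every} $\alpha$ simultaneously --- in fact $V^{2}(\cF)=\emptyset$, which is stronger than the codimension bound your strategy was aiming for --- precisely because all of the $\alpha$-dependence has been pushed into pieces whose cohomology is insensitive to $\alpha$. If you want to salvage your approach, the fix is to make this choice of reducible divisor rather than a smooth one.
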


\begin{proof}
Let $\cF$ be a torsion-free coherent sheaf which is 1-regular with respect to $\cO_{X}(1).$  It is already clear that ${\rm codim}(V^{1}(\cF)) \geq 1;$ we will show that $V^{i}(\cF) = \emptyset$ for $i=2,3.$  

Fix reduced divisors $D' \in |\cO_{C}(1)|$ and $Z \in |\cO_{\P(\cV)}(1)|,$ and define $Y = \pi^{-1}(D').$  It is immediate that $H:=Y+Z \in |\cO_{X}(1)|.$  Since $d \geq g,$ we have from Riemann-Roch that $V^{0}(\cO_{C}(1)) = {\rm Pic}^{0}(C) \cong {\rm Pic}^{0}(X).$  For each $\alpha \in {\rm Pic}^{0}(C)$ we fix  $D'_{\alpha} \in |\cO_{C}(1) \otimes \alpha|$ and define $D_{\alpha} := \pi^{-1}(D'_{\alpha})+Z$; this is algebraically equivalent to $H.$  The 1-regularity hypothesis on $\cF$ implies that for each $\alpha \in {\rm Pic}^{0}(C)$ and $i=2,3$ we have
\begin{equation}
H^{i}(\cF \otimes \pi^{\ast}\alpha) = H^{i}(\cF \otimes \cO_{X}(D_{\alpha}-H)) \cong H^{i}(\cF|_{D_{\alpha}} \otimes \pi^{\ast}\alpha)
\end{equation}
This vanishes for dimension reasons when $i=3$.  It follows that $V^{3}(\cF) = \emptyset.$  For the remaining case $i=2,$ we consider the Mayer-Vietoris sequence
\begin{equation}
\label{eq:mv-sequence}
0 \to (\cF \otimes \pi^{\ast}\alpha)|_{D_{\alpha}}  \to (\cF \otimes \pi^{\ast}\alpha)|_{\pi^{-1}(D'_{\alpha})} \oplus (\cF \otimes \pi^{\ast}\alpha)|_{Z} \to (\cF \otimes \pi^{\ast}\alpha)|_{\pi^{-1}(D'_{\alpha}) \cap Z} \to 0
\end{equation}
Observe that $\pi^{-1}(D'_{\alpha})$ is the disjoint union of $d$ irreducible components, each of which is isomorphic to $\P^2.$  In particular, $\pi^{-1}(D'_{\alpha}) \cap Z$ is the disjoint union of $d$ smooth irreducible curves, each of which is a line in a copy of $\P^2.$  Since $\cF(1)$ is globally generated and torsion-free, it follows that the torsion-free summand of the restriction of $\cF$ to each irreducible component of $\pi^{-1}(D'_{\alpha}) \cap Z$ is a direct sum of line bundles of degree $-1$ or greater.  Moreover, the restriction of $\pi^{\ast}\alpha$ to each component of $\pi^{-1}(D'_{\alpha})$ is trivial, so $H^{1}((\cF \otimes \pi^{\ast}\alpha)|_{\pi^{-1}(D'_{\alpha}) \cap Z}) = H^{1}(\cF|_{\pi^{-1}(D'_{\alpha}) \cap Z})=0.$  Consequently (\ref{eq:mv-sequence}) implies that $H^{2}((\cF \otimes \pi_{1}^{\ast}\alpha)|_{D_{\alpha}})$ injects into $H^{2}((\cF \otimes \alpha)|_{Y_{\alpha}}) \oplus H^{2}((\cF \otimes \pi_{1}^{\ast}\alpha)|_{Z}).$  We will have shown that $V^{2}(\cF) = \emptyset$ once we verify that $H^{2}((\cF \otimes \alpha)|_{Y_{\alpha}})$ and $H^{2}((\cF \otimes \pi_{1}^{\ast}\alpha)|_{Z})$ are both 0.

For the first summand, it is enough to check that $H^{2}(\cF|_{Y_{\alpha}})=0.$  We already know that $H^{2}(\cF)=0,$ so $H^{2}(\cF|_{Y_{\alpha}})$ injects into $H^{3}(\cF(-Y_{\alpha})).$  Given that $2H$ is linearly equivalent to $(Y_{\alpha}+Z)+(Y_{-\alpha}+Z),$ we have that $H^{3}(\cF(-Y_{\alpha}))$ is a quotient of $H^{3}(\cF(-2)),$ which is 0.

For the second summand, we observe that $H^{2}((\cF \otimes \pi^{\ast}\alpha)|_{Z}) \cong H^{1}(R^{1}\pi_{\ast}(\cF|_{Z}) \otimes \alpha).$  Since $H^{1}(\cF|_{Y_{\alpha} \cap Z})=0$ for all $\alpha$ by our previous argument, it follows from Grauert's theorem that $R^{1}\pi_{1\ast}(\cF|_{Z})=0,$ so $H^{2}((\cF \otimes \pi_{1}^{\ast}\alpha)|_{Z})=0$ as desired.
\end{proof}

\end{document}